\theoremstyle{plain}
\newtheorem{theorem}{Theorem}
\newtheorem{lemma}{Lemma}
\newtheorem{proposition}{Proposition}
\theoremstyle{definition}
\newtheorem{definition}{Definition}
\theoremstyle{example}
\newtheorem{remark}{Remark}
\theoremstyle{remark}
\numberwithin{equation}{section}
\begin{document}
\begin{center}
{\bf\Large Modular $k$-noncrossing diagrams}
\\
\vspace{15pt} Christian M. Reidys$^{\,\star}$, Rita R. Wang and 
Albus Y. Y. Zhao
\end{center}

\begin{center}
Center for Combinatorics, LPMC-TJKLC\\
Nankai University  \\
Tianjin 300071\\
         P.R.~China\\
         Phone: *86-22-2350-6800\\
         Fax:   *86-22-2350-9272\\
duck@santafe.edu
\end{center}

\centerline{\bf Abstract}{\small
In this paper we compute the generating function of modular,
$k$-noncrossing diagrams. A $k$-noncrossing diagram is called
modular if it does not contains any isolated arcs and any arc
has length at least four. Modular diagrams represent the 
deformation retracts of RNA pseudoknot structures 
\cite{Stadler:99,Reidys:07pseu,Reidys:07lego} and their 
properties reflect basic features of these bio-molecules.
The particular case of modular noncrossing diagrams has been 
extensively studied 
\cite{Waterman:78b,Waterman:79,Waterman:93,Schuster:98}. 
Let ${\sf Q}_k(n)$ denote the number of modular $k$-noncrossing 
diagrams over $n$ vertices. We derive exact enumeration results 
as well as the asymptotic formula ${\sf Q}_k(n)\sim c_k
n^{-(k-1)^2-\frac{k-1}{2}}\gamma_{k}^{-n}$ for $k=3, \ldots, 
9$ and derive a new proof of the formula ${\sf Q}_2(n)\sim 
1.4848\, n^{-3/2}\,1.8489^{-n}$ \cite{Schuster:98}.
}

{\bf Keywords}: $k$-noncrossing diagram, generating function, shape, 
symbolic enumeration, singularity analysis.

\section{Introduction}\label{S:Intro}

The main result of this paper is the computation of the generating function 
of modular $k$-noncrossing diagrams, ${\bf Q}_k(z)$. 
A diagram is a labeled graph over the vertex set $[n]=\{1, \dots, n\}$ with 
vertex degrees not greater than one. The standard representation of a diagram
is derived by drawing its vertices in a horizontal line and its arcs 
$(i,j)$ in the upper half-plane.
A $k$-crossing is a set of $k$ distinct arcs
$(i_1, j_1), (i_2, j_2),\ldots,(i_k, j_k)$ with the property
\begin{equation*}
i_1 < i_2 < \ldots < i_k  < j_1 < j_2 < \ldots< j_k.
\end{equation*}
Similarly a $k$-nesting is a set $k$ distinct such arcs such that
\begin{equation*}
i_1 < i_2 < \ldots < i_k  < j_k < \ldots j_2 < j_1.
\end{equation*}
Let $A,B$ be two sets of arcs, then $A$ is nested in $B$ if any element
of $A$ is nested in any element of $B$.
A diagram without any $k$-crossings is called a $k$-noncrossing diagram. 
The length of an arc, $(i,j)$, is $s=j-i$, and we refer to such 
arc as $s$-arc. Furthermore,
\begin{itemize}
\item a stack of length $\sigma$, $S_{i,j}^{\sigma}$, is a maximal sequence
      of ``parallel'' arcs. A diagram, in which any arc is contained in a
      $s$-stack, where $s\ge 2$ is called a canonical diagram,
\begin{equation*}
((i,j),(i+1,j-1),\dots,(i+(\sigma-1),j-(\sigma-1))).
\end{equation*}
$S_{i,j}^{\sigma}$ is also referred to as a {$\sigma$-stack}.
\item a stem\index{stem} of size $s$ is a sequence
\begin{equation*}
\left(S_{i_1,j_1}^{\sigma_1},S_{i_2,j_2}^{\sigma_2},
\ldots,S_{i_{s},j_{s}}^{\sigma_{s}}\right)
\end{equation*}
where $S_{i_{m},j_{m}}^{\sigma_m}$ is nested in
$S_{i_{m-1},j_{m-1}}^{\sigma_{m-1}}$ such that any arc nested in
$S_{i_{m-1},j_{m-1}}^{\sigma_{m-1}}$ is either contained or nested
in $S_{i_{m},j_{m}}^{\sigma_m}$, for $2\leq m\leq s$.
\item a hairpin-loop (loop) consists of an arc $(i,j)$ and a sequence of
      consecutive isolated vertices $\{i+1,i+2,\ldots,j-1\}$, 
      see Fig.~\ref{F:diagram}.
\end{itemize}
\begin{figure}[h!t!b!p]
\centering
\scalebox{0.9}[0.9]{\includegraphics{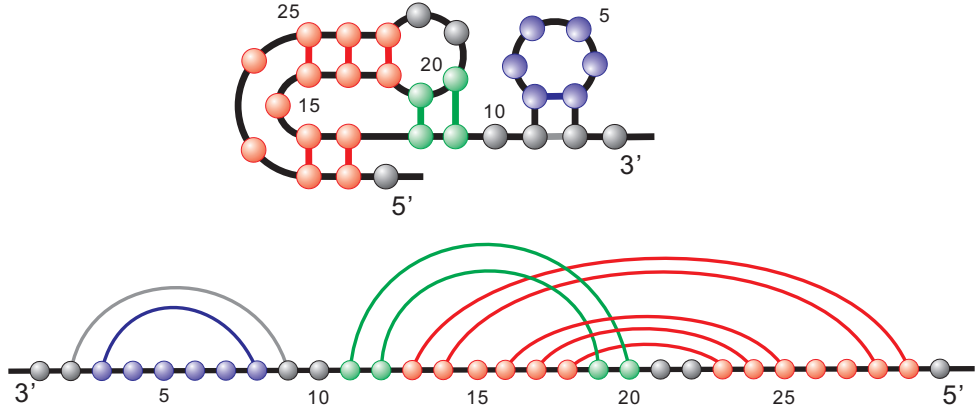}}
\caption{\small Features of a modular $3$-noncrossing diagram represented
as planar graph (top) and in standard representation (bottom).
We display a stack of length two (green), a stem of size two (red) and a 
hairpin-loop (purple).}
\label{F:diagram}
\end{figure}

RNA secondary structures 
\cite{Waterman:78b,Waterman:79,Waterman:80,Waterman:94a} are in the 
language of diagrams exactly modular, $2$-noncrossing diagrams.
In \cite{Reidys:07pseu,Reidys:07asym,Reidys:07lego,Reidys:08ma}, various
classes of $k$-noncrossing diagrams have been enumerated. 
However the approach employed in these papers does not work for modular
$k$-noncrossing diagrams. In contrast to the situation for RNA secondary 
structures, the combination of minimum arc length and canonicity poses 
serious difficulties. 
The main idea is to build modular $k$-noncrossing diagrams via inflating 
their colored, ${\sf V}_k$-shapes, see Fig.~\ref{F:inflating}.
These shapes will be discussed in detail in Section~\ref{S:color}.
The inflation gives rise to ``stem-modules'' over shape-arcs and is the
key for the symbolic derivation of ${\bf Q}_k(z)$.
One additional observation maybe worth to be pointed out: 
the computation of the 
generating function of colored shapes in Section~\ref{S:color}, hinges on
the intuition that the crossings of short arcs are relatively simple and 
give rise to manageable recursions. The coloring of these shapes then allows 
to identify the arc-configurations that require special attention during 
the inflation process. Our results are of importance in the context of 
RNA pseudoknot structures \cite{Rietveld:82} and evolutionary optimization
\cite{Reidys-Stadler-Combinatorial landscapes-Siam Reviews}.
Furthermore they allow for conceptual proofs of the results in
\cite{Reidys:08han,Reidys:07asym,Reidys:07lego,Reidys:08ma}.

\begin{figure}[h!t!b!p]
\centering
\scalebox{1.2}[1.2]{\includegraphics{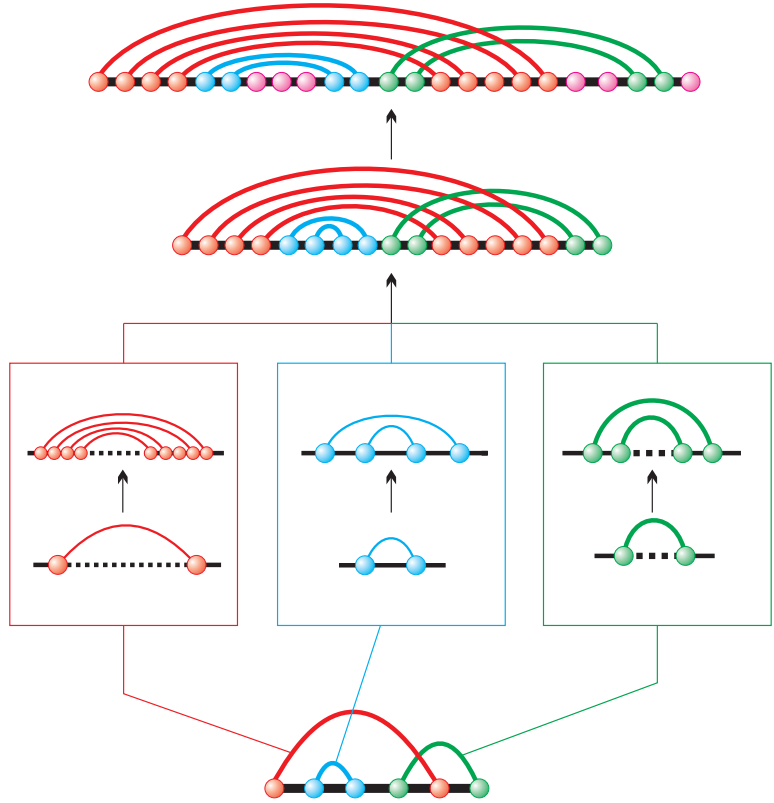}}
\caption{\small Modular $k$-noncrossing diagrams: the inflation method. 
A modular $3$-noncrossing diagram (top) is derived by inflating its 
${\sf V}_3$-shape (bottom) in two steps. 
First we individually inflate each shape-arc into a more complex 
configuration and second insert isolated vertices (purple).}
\label{F:inflating}
\end{figure}

The paper is organized as follows. In Section~\ref{S:basics} we recall
some basic facts on singularity analysis, the generating function of
$k$-noncrossing matchings, ${\sf V}_k$-shapes and symbolic enumeration. 
Since the
results on $k$-noncrossing matchings are difficult to recover from the 
literature or simply new but immaterial for our purposes, we present 
their proofs in the Supplemental Materials (SM). In Section~\ref{S:k=2} 
we analyze modular, noncrossing diagrams and in 
Section~\ref{S:color} we compute the generating function of colored 
shapes. Finally we prove the main theorem in Section~\ref{S:main}.

\section{Some basic facts}\label{S:basics}
\subsection{Singularity analysis}
Oftentimes, we are given a generating function without having an
explicit formula of its coefficients. Singularity analysis is a framework
that allows to analyze the asymptotics of these coefficients.
The key to the asymptotics of the coefficients are the singularities,
which raises the question on how to locate them.
In the particular case of power series $f(z)=\sum_{n\geq 0}a_n\, z^n$ with
nonnegative coefficients and a radius of convergence $R>0$, a theorem of
Pringsheim \cite{Flajolet:07a,Tichmarsh:39}, guarantess a positive real
dominant singularity at $z=R$. As we are dealing here with combinatorial
generating functions we always have this dominant singularity.
We shall prove that for all our generating functions it is the
unique dominant singularity. The class of theorems that deal with the
deduction of information about coefficients from the generating function
are called transfer-theorems \cite{Flajolet:07a}.

\begin{theorem}\label{T:transfer1}\cite{Flajolet:07a}
Let $[z^n]f(z)$ denote the $n$-th coefficient of the power series
$f(z)$ at $z=0$.\\
(a) Suppose $f(z)=(1-z)^{-\alpha}$, $\alpha\in\mathbb{C}\setminus
\mathbb{Z}_{\le 0}$, then
\begin{eqnarray}
[z^n]\, f(z) & \sim & \frac{n^{\alpha-1}}{\Gamma(\alpha)}\left[
1+\frac{\alpha(\alpha-1)}{2n}+\frac{\alpha(\alpha-1)(\alpha-2)(3\alpha-1)}
{24 n^2}+\right.\nonumber \\
&&\qquad  \quad
\left.\frac{\alpha^2(\alpha-1)^2(\alpha-2)(\alpha-3)}{48n^3}+
O\left(\frac{1}{n^4}\right)\right].
\end{eqnarray}
(b) Suppose $f(z)=(1-z)^{r}\log(\frac{1}{1-z})$,
$r\in\mathbb{Z}_{\ge 0}$, then we have
\begin{equation}
[z^n]f(z)\sim \,  (-1)^r\frac{r!}{n(n-1)\dots(n-r)}.
\end{equation}
\end{theorem}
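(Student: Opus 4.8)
The plan is to reduce both parts to \emph{exact} coefficient identities and only then extract the asymptotics; for the precise forms stated here this is more economical than invoking the full transfer machinery, though I will also indicate the contour-integral route of \cite{Flajolet:07a}.

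\textbf{Part (a).} First I would record the exact identity
\begin{equation*}
[z^n](1-z)^{-\alpha}\;=\;(-1)^n\binom{-\alpha}{n}\;=\;\binom{n+\alpha-1}{n}\;=\;\frac{(\alpha)_n}{n!}\;=\;\frac{\Gamma(n+\alpha)}{\Gamma(\alpha)\,\Gamma(n+1)},
\end{equation*}
where $(\alpha)_n=\alpha(\alpha+1)\cdots(\alpha+n-1)$; this follows from the generalized binomial series for $|z|<1$, and since $(\alpha)_n/n!$ is a polynomial in $\alpha$ it is valid for every $\alpha\in\mathbb C\setminus\mathbb Z_{\le 0}$. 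The problem thereby becomes the purely analytic one of expanding $\Gamma(n+\alpha)/\Gamma(n+1)$ for large $n$. For this I would start from Stirling's series for $\log\Gamma$, write
\begin{equation*}
\log\frac{\Gamma(n+\alpha)}{\Gamma(n+1)}\;=\;(\alpha-1)\log n+\sum_{j\ge 1}\frac{c_j(\alpha)}{n^{j}},
\end{equation*}
compute $c_1,c_2,c_3$ by expanding $\log(n+\alpha)$ and the $\tfrac{1}{12x}$, $-\tfrac{1}{360x^3}$ correction terms in powers of $1/n$, and then exponentiate and divide by $\Gamma(\alpha)$ to obtain the bracketed expansion through $O(n^{-4})$. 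Alternatively, in the spirit of \cite{Flajolet:07a}, one replaces the Cauchy circle in $[z^n]f(z)=\frac{1}{2\pi i}\oint f(z)z^{-n-1}\,dz$ by a Hankel contour around $z=1$, substitutes $z=1+t/n$, and identifies the limiting integral with Hankel's representation $\frac{1}{\Gamma(\alpha)}=\frac{1}{2\pi i}\int_{\mathcal H}e^{t}t^{-\alpha}\,dt$; the lower-order terms then come from expanding $(1+t/n)^{-n-1}=e^{-t}\bigl(1+\tfrac{t^2-t}{2n}+\cdots\bigr)$ under the integral sign.

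\textbf{Part (b).} Here I would use that differentiating in the exponent produces the logarithm,
\begin{equation*}
(1-z)^{r}\log\frac{1}{1-z}\;=\;\frac{\partial}{\partial\alpha}(1-z)^{-\alpha}\Big|_{\alpha=-r},
\end{equation*}
so that, by part (a), $[z^n]f(z)=\frac{d}{d\alpha}\bigl[(\alpha)_n/n!\bigr]\big|_{\alpha=-r}$. For $0\le r\le n-1$ exactly the factor $(\alpha+r)$ of $(\alpha)_n$ vanishes at $\alpha=-r$, hence only the product of the remaining $n-1$ factors survives the differentiation; that product factors as $\prod_{j=0}^{r-1}(j-r)\cdot\prod_{j=r+1}^{n-1}(j-r)=(-1)^r r!\,(n-1-r)!$, which yields the exact value
\begin{equation*}
[z^n]f(z)\;=\;(-1)^r\,\frac{r!\,(n-1-r)!}{n!}\;=\;(-1)^r\,\frac{r!}{n(n-1)\cdots(n-r)}\qquad(n\ge r+1),
\end{equation*}
even stronger than the asserted asymptotic equivalence.

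\textbf{Main obstacle.} Conceptually there is little difficulty; the only genuine work is the coefficient bookkeeping in part (a), where getting the $n^{-2}$ and $n^{-3}$ terms right (the factor $3\alpha-1$ and the degree-six polynomial in $\alpha$) is mechanical but error-prone. If one prefers to avoid this, the cleanest option is to quote the classical expansion of $\Gamma(n+a)/\Gamma(n+b)$ in terms of generalized Bernoulli polynomials, or simply to cite \cite{Flajolet:07a,Tichmarsh:39} for this standard estimate and present only the structural reductions above.
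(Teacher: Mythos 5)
The paper gives no proof of this statement---it is quoted directly from \cite{Flajolet:07a}---so the only benchmark is the standard reference, and your argument is essentially the proof given there for part (a): the exact identity $[z^n](1-z)^{-\alpha}=\Gamma(n+\alpha)/(\Gamma(\alpha)\Gamma(n+1))$ followed by the Stirling expansion of the Gamma-ratio (equivalently the Hankel-contour computation you sketch). You correctly identify that the only real labor is extracting the coefficients $\tfrac{\alpha(\alpha-1)}{2}$, $\tfrac{\alpha(\alpha-1)(\alpha-2)(3\alpha-1)}{24}$, $\tfrac{\alpha^2(\alpha-1)^2(\alpha-2)(\alpha-3)}{48}$ from that expansion; since you do not actually carry this out, your write-up establishes the form of the expansion but not the stated numerical coefficients, which is acceptable for a quoted classical result but should be acknowledged as such. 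Your part (b) is clean and in fact stronger than the assertion: differentiating $(\alpha)_n/n!$ at $\alpha=-r$, where exactly the factor $(\alpha+r)$ vanishes, gives the \emph{exact} value $(-1)^r r!/\bigl(n(n-1)\cdots(n-r)\bigr)$ for $n\ge r+1$ (the cases $r=0,1$ confirm this directly), so the claimed asymptotic equivalence is immediate. The one step you should state explicitly is why $\partial/\partial\alpha$ may be interchanged with $[z^n]$: since $[z^n](1-z)^{-\alpha}=(\alpha)_n/n!$ is a polynomial in $\alpha$ and $(1-z)^{-\alpha}$ is, for fixed $|z|<1$, entire in $\alpha$ with locally uniform convergence of its $z$-expansion, coefficientwise differentiation is legitimate; without that remark the identity $[z^n]f(z)=\frac{d}{d\alpha}[(\alpha)_n/n!]\big|_{\alpha=-r}$ is asserted rather than proved.
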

We use the notation
\begin{equation}\label{E:genau}
\left(f(z)=\Theta\left(g(z)\right) \
\text{\rm as $z\rightarrow \rho$}\right)\  \Longleftrightarrow \
\left(f(z)/g(z)\rightarrow c\ \text{\rm as $z\rightarrow \rho$}\right),
\end{equation}
where $c$ is some constant. We say a function $f(z)$ is
$\Delta_\rho$ analytic at its dominant singularity $z=\rho$, if it
analytic in some domain $\Delta_\rho(\phi,r)=\{ z\mid \vert z\vert <
r, z\neq r,\, \vert {\rm Arg}(z-\rho)\vert >\phi\}$, for some
$\phi,r$, where $r>|\rho|$ and $0<\phi<\frac{\pi}{2}$. Since the
Taylor coefficients have the property
\begin{equation}\label{E:scaling}
\forall \,\gamma\in\mathbb{C}\setminus 0;\quad [z^n]f(z)=\gamma^n
[z^n]f(\frac{z}{\gamma}),
\end{equation}
We can, without loss of generality, reduce our analysis to the case
where $z=1$ is the unique dominant singularity. The next theorem
transfers the asymptotic expansion of a function near its unique
dominant singularity to the asymptotic of the function's
coefficients.

\begin{theorem}\label{T:transfer1b}{\bf }\cite{Flajolet:07a}
Let $f(z)$ be a $\Delta_1$ analytic function at its unique dominant 
singularity $z=1$. Let 
$$g(z)=(1-z)^{\alpha}\log^{\beta}\left(\frac{1}{1-z}\right)
,\quad\alpha,\beta\in \mathbb{R}.
$$ 
That is we have in the intersection of a neighborhood of $1$
\begin{equation}\label{T:transfer2}
f(z) = \Theta(g(z)) \quad \text{\it for } z\rightarrow 1.
\end{equation}
Then we have
\begin{equation}
[z^n]f(z)= \Theta\left([z^n]g(z)\right).
\end{equation}
\end{theorem}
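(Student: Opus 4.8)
The plan is to run the classical singularity-analysis machinery of Flajolet and Odlyzko that already underlies Theorem~\ref{T:transfer1}. Recall from (\ref{E:genau}) that the hypothesis $f(z)=\Theta(g(z))$ means $f(z)/g(z)\to c$ for some constant $c$, which we take to hold uniformly as $z\to 1$ within $\Delta_1$ intersected with a neighbourhood $U$ of $1$; by the scaling identity (\ref{E:scaling}) there is no loss in having already normalised the dominant singularity to $z=1$, so that $f$ is $\Delta_1$-analytic and $z=1$ is its only singularity on $\{|z|=1\}$. The argument splits into three ingredients: (i) the coefficient asymptotics of the basic scale $g(z)=(1-z)^{\alpha}\log^{\beta}(1/(1-z))$; (ii) a transfer lemma for remainders, namely that a $\Delta_1$-analytic function which is $O\big((1-z)^{\alpha}\log^{\beta}(1/(1-z))\big)$ near $1$ has $n$-th coefficient $O\big(n^{-\alpha-1}(\log n)^{\beta}\big)$; and (iii) combining these through the splitting $f=c\,g+(f-c\,g)$.

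For (i) I would write $[z^n]g(z)=\frac{1}{2\pi i}\oint g(z)z^{-n-1}\,dz$ and, using the fact that $g$ is analytic in the slit plane $\mathbb{C}\setminus[1,\infty)$, deform the circle to a Hankel-type contour that comes in above the cut $[1,\infty)$, encircles $z=1$ at distance $\asymp 1/n$, and returns below (the portion far along the slit contributing exponentially less). The substitution $z=1+t/n$ turns this into $\frac{n^{-\alpha-1}}{2\pi i}\int_{\mathcal H}(-t)^{\alpha}\big(\log n-\log(-t)\big)^{\beta}e^{-t}\big(1+O(t/n)\big)\,dt$; expanding $\big(\log n-\log(-t)\big)^{\beta}=(\log n)^{\beta}\big(1+O(\log(-t)/\log n)\big)$ and invoking the Hankel-loop representation $\frac{1}{2\pi i}\int_{\mathcal H}(-t)^{\alpha}e^{-t}\,dt=\frac{1}{\Gamma(-\alpha)}$ yields $[z^n]g(z)\sim\frac{n^{-\alpha-1}(\log n)^{\beta}}{\Gamma(-\alpha)}$, which is nonzero precisely when $\alpha\notin\mathbb{Z}_{\ge 0}$. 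For $\alpha\in\mathbb{Z}_{\ge 0}$ with $\beta>0$ one differentiates the $\beta=0$ scale with respect to $\alpha$ (cf.\ Theorem~\ref{T:transfer1}(b)); the case $\alpha\in\mathbb{Z}_{\ge 0},\ \beta=0$ is vacuous.

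For (ii) the Cauchy integral $[z^n]h(z)=\frac{1}{2\pi i}\oint_{\gamma}h(z)z^{-n-1}\,dz$ is taken over a contour $\gamma$ assembled from a small circle of radius $1/n$ about $z=1$, two outward line segments, and a large arc of radius $1+c/n$ that closes $\gamma$ inside $\Delta_1$ away from the singularity (this is where uniqueness of the dominant singularity is used). On the large arc $h$ is bounded while $|z|^{-n-1}$ is exponentially small relative to the rest, so that piece is negligible; on the inner circle and the segments the bound $|h(z)|=O\big(|1-z|^{\alpha}|\log(1-z)|^{\beta}\big)$ together with $|z|^{-n-1}=O(1)$ on the scale $|1-z|\asymp 1/n$ (where $|\log(1-z)|=\log n+O(1)$) gives, after $z=1+t/n$, a contribution $O\big(n^{-\alpha-1}(\log n)^{\beta}\big)$. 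Summing the pieces proves the lemma.

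To conclude, the hypothesis gives $f-c\,g=o(g)$, hence in particular $f-c\,g=O\big((1-z)^{\alpha}\log^{\beta}(1/(1-z))\big)$ on $\Delta_1\cap U$, so by (ii) $[z^n](f-c\,g)=O\big(n^{-\alpha-1}(\log n)^{\beta}\big)$; combined with (i) this gives $[z^n]f=c\,[z^n]g+O\big(n^{-\alpha-1}(\log n)^{\beta}\big)=\Theta\big(n^{-\alpha-1}(\log n)^{\beta}\big)=\Theta\big([z^n]g\big)$ when $c\ne 0$, and if $c=0$ the assertion $[z^n]f=o([z^n]g)$ follows at once from (i) and (ii). I expect the main obstacle to be step (ii): one must choose the $\Delta$-contour so that it stays inside the region of analyticity, verify that the large-arc contribution is genuinely exponentially negligible, and control the interplay of the $(1-z)^{\alpha}$ and $\log^{\beta}$ factors uniformly along the parts of $\gamma$ near $z=1$ — the logarithmic factor being absorbed via the rescaled estimate $|\log(1-z)|=\log n+O(1)$. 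The careful bookkeeping with the winding Hankel contour and the selected branch of $(-t)^{\alpha}$ is the delicate point throughout.
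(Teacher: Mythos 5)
The paper itself offers no proof of this statement---it is quoted from Flajolet--Sedgewick---so the only meaningful benchmark is the standard singularity-analysis argument in that reference, which is exactly the one you are reconstructing: Hankel-contour asymptotics for the scale $g$, a transfer lemma along a $\Delta$-contour, and the decomposition $f=c\,g+(f-c\,g)$. Steps (i) and (ii) are essentially the right computations, with one technical slip: if your outer arc has radius $1+c/n$, its contribution is \emph{not} exponentially negligible, since $|z|^{-n}\approx e^{-c}$ there. The contour should run the two segments out to a \emph{fixed} small distance $\eta$ from $1$ (inside $\Delta_1$, at an angle $\phi\in(\phi_0,\pi/2)$ off the cut) and close with an arc of fixed radius $|1+\eta e^{i\phi}|>1$, on which $|z|^{-n}$ genuinely decays geometrically.

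The genuine gap is in the final combination. From $f-c\,g=o(g)$ you invoke only the big-$O$ transfer of step (ii), which yields $[z^n](f-c\,g)=O\bigl(n^{-\alpha-1}(\log n)^{\beta}\bigr)$ with an \emph{unspecified} implied constant $K$. Since $[z^n]g\sim n^{-\alpha-1}(\log n)^{\beta}/\Gamma(-\alpha)$, the resulting identity $[z^n]f=c\,[z^n]g+O\bigl(n^{-\alpha-1}(\log n)^{\beta}\bigr)$ gives no lower bound whatsoever once $K\ge |c|/|\Gamma(-\alpha)|$, so you cannot conclude $[z^n]f=\Theta\bigl([z^n]g\bigr)$---let alone the convergence of the ratio $[z^n]f/[z^n]g$ that the paper's definition (\ref{E:genau}) of $\Theta$ actually requires. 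The same defect affects your $c=0$ case: feeding $f=o(g)$ into an $O$-transfer returns only $[z^n]f=O\bigl([z^n]g\bigr)$, not $o$. What is needed is the little-$o$ version of the transfer lemma: if $h=o(g)$ \emph{uniformly} in $\Delta_1$ near $1$, then $[z^n]h=o\bigl(n^{-\alpha-1}(\log n)^{\beta}\bigr)$. This follows from your contour by an $\varepsilon$-argument: for any $\varepsilon>0$ choose $r$ with $|h|\le\varepsilon|g|$ on $\Delta_1\cap\{|z-1|\le r\}$; the portion of the contour within distance $r$ of $1$ then contributes at most $\varepsilon K\,n^{-\alpha-1}(\log n)^{\beta}$ with $K$ independent of $\varepsilon$, while the remainder lies on $|z|\ge\rho_r>1$ and is exponentially small; letting $n\to\infty$ and then $\varepsilon\to 0$ closes the argument. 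Finally, note that even with this repair the conclusion is problematic when $\alpha\in\mathbb{Z}_{\ge 0}$ and $\beta\neq 0$ (a case you flag in step (i)): there $1/\Gamma(-\alpha)=0$ and $[z^n]g$ drops to the order $n^{-\alpha-1}(\log n)^{\beta-1}$, which is smaller than the $o\bigl(n^{-\alpha-1}(\log n)^{\beta}\bigr)$ error the transfer delivers; this degeneracy is a defect of the theorem as stated rather than of your argument, but any complete proof must exclude or treat that case separately.
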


\subsection{$k$-noncrossing matchings}

A $k$-noncrossing matching is a $k$-noncrossing diagram without isolated
points. Let $f_k(2n)$ denote the number of $k$-noncrossing matchings.
The exponential generating function of $k$-noncrossing matchings
satisfies the following identity \cite{Chen,Grabiner:93a,Reidys:07pseu}
\begin{eqnarray}\label{E:ww0}
\label{E:ww1} \sum_{n\ge 0} f_{k}(2n)\cdot\frac{z^{2n}}{(2n)!} & =
& \det[I_{i-j}(2z)-I_{i+j}(2z)]|_{i,j=1}^{k-1}
\end{eqnarray}
where $I_{r}(2z)=\sum_{j \ge 0}\frac{z^{2j+r}}{{j!(j+r)!}}$ is the
hyperbolic Bessel function of the first kind of order $r$.
Eq.~(\ref{E:ww0}) combined with the fact that recursions for the coefficients
of the exponential generating function translate into recursions
for the coefficients of the ordinary generating function,
allows us to prove:

\begin{lemma}\label{L:k-D}
The generating function of $k$-noncrossing
matchings over $2n$ vertices, {${\bf
F}_k(z)=\sum_{n\ge 0}f_k(2n)\,z^{n}$} is $D$-finite, \cite{Stanley:80},
i.e.\ there exists some $e\in \mathbb{N}$
such that
\begin{equation}\label{E:KJ}
q_{0,k}(z)\frac{d^e}{d z^e}{\bf F}_k(z)+q_{1,k}(z)\frac{d^{e-1}}{d
z^{e-1}}{\bf F}_k(z)+\cdots+q_{e,k}(z){\bf F}_k(z)=0,
\end{equation}
where $q_{j,k}(z)$ are polynomials.
\end{lemma}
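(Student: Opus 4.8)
The plan is to exploit the determinantal formula (\ref{E:ww0}) together with the classical closure properties of $D$-finite power series \cite{Stanley:80}. First I would recall that the hyperbolic Bessel function $I_r(2z)$ is $D$-finite in $z$: it satisfies a second-order linear ODE with polynomial coefficients (a rescaled modified Bessel equation), which one checks directly from the series $I_r(2z)=\sum_{j\ge 0} z^{2j+r}/(j!(j+r)!)$. Since the class of $D$-finite functions is closed under sums, products, and differentiation, every entry $I_{i-j}(2z)-I_{i+j}(2z)$ of the $(k-1)\times(k-1)$ matrix in (\ref{E:ww0}) is $D$-finite, and hence so is the determinant, being a polynomial expression in finitely many $D$-finite functions. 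Therefore the exponential generating function $E_k(z):=\sum_{n\ge 0} f_k(2n) z^{2n}/(2n)!$ is $D$-finite.

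The second step is to transfer $D$-finiteness from the exponential generating function to the ordinary one. A $D$-finite power series is equivalently characterized by the fact that its coefficient sequence is $P$-recursive (polynomially recursive): $(a_n)$ satisfies a recurrence $p_0(n)a_{n+d}+p_1(n)a_{n+d-1}+\cdots+p_d(n)a_n=0$ with polynomial coefficients $p_j$. From the ODE satisfied by $E_k(z)$ one extracts such a recurrence for $b_n:=f_k(2n)/(2n)!$. Writing $f_k(2n)=(2n)!\,b_n$ and substituting, the factorials $(2n)!$ contribute rational (in fact polynomial, after clearing denominators) factors in $n$ when one passes between consecutive indices, so $(f_k(2n))_{n\ge 0}$ is again $P$-recursive. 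Equivalently, one may invoke the standard fact that the Hadamard product of a $D$-finite series with the $D$-finite series $\sum (2n)!\, z^n$ — or more precisely the Borel–Laplace-type transform implementing $\sum b_n z^{2n}/(2n)! \mapsto \sum f_k(2n) z^n$ — preserves $P$-recursiveness of the coefficient sequence. Since ${\bf F}_k(z)=\sum_{n\ge 0} f_k(2n) z^n$ has $P$-recursive coefficients, ${\bf F}_k(z)$ is $D$-finite, i.e.\ satisfies an identity of the form (\ref{E:KJ}) for some $e\in\mathbb{N}$ with polynomial coefficients $q_{j,k}(z)$.

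The main obstacle I anticipate is the bookkeeping in the transfer step: one must make the passage from the ODE for the exponential generating function to a clean polynomial recurrence for $f_k(2n)$ precise, keeping track of how the index shift $n\mapsto n+1$ interacts with the factor $(2n)!$ versus $(2n+2)!$, and verifying that after clearing denominators one genuinely obtains polynomial (not merely rational) coefficients $p_j(n)$, hence polynomial $q_{j,k}(z)$ after translating back. This is routine in principle — it is exactly the content of the remark preceding the lemma, that "recursions for the coefficients of the exponential generating function translate into recursions for the coefficients of the ordinary generating function" — but it is the only place where genuine care is needed. Everything else reduces to citing the closure properties of $D$-finite functions from \cite{Stanley:80}. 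I would present the argument as: (i) each matrix entry is $D$-finite; (ii) the determinant is $D$-finite by closure under $+$ and $\times$; (iii) hence the coefficient sequence $f_k(2n)/(2n)!$ is $P$-recursive; (iv) hence $f_k(2n)$ is $P$-recursive; (v) hence ${\bf F}_k(z)$ is $D$-finite.
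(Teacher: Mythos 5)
Your proposal is correct and follows essentially the same route as the paper: the paper derives Lemma~\ref{L:k-D} precisely from the determinantal formula (\ref{E:ww0}) (whose Bessel-function entries are $D$-finite, hence so is the determinant by closure under sums and products) combined with the observation that a $P$-recurrence for the coefficients $f_k(2n)/(2n)!$ of the exponential generating function translates into one for $f_k(2n)$, and hence into $D$-finiteness of ${\bf F}_k(z)$. Your step (iii)--(iv) bookkeeping, including the passage from even powers $z^{2n}$ to $z^n$ and the polynomial factor relating $(2n+2)!$ to $(2n)!$, is exactly the content the paper relegates to its supplemental sketch.
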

We sketch the proof of Lemma~\ref{L:k-D} in the SM.

Lemma~\ref{L:k-D} is of importance for two reasons: first any singularity of
${\bf F}_k(z)$ is contained in the set of roots of $q_{0,k}(z)$
\cite{Stanley:80}, which we denote by $R_k$.
Second, the specific form of the ODE in eq.~(\ref{E:KJ}) is the key to derive
the singular expansion of ${\bf F}_k(z)$, see Proposition~\ref{P:fk} below.

We proceed by computing for $2\leq k\leq 9$, the polynomials $q_{0,k}(z)$
and their roots, see Table \ref{Table:polyroot} and
\begin{table}
\begin{center}
\begin{tabular}{cll}
\hline
$k$ & $q_{0,k}(z)$ & $R_k$  \\
\hline
$2$ & $(4z-1)z$ & $\{\frac{1}{4}\}$\\
$3$ & $(16z-1)z^2$ & $\{\frac{1}{16}\}$\\
$4$ & $(144z^2-40z+1)z^3$ & $\{\frac{1}{4},\frac{1}{36}\}$\\
$5$ & $(1024z^2-80z+1)z^4$ & $\{\frac{1}{16},\frac{1}{64}\}$\\
$6$ & $(14400z^3-4144z^2+140z-1)z^5$ &
$\{\frac{1}{4},\frac{1}{36},\frac{1}{100}\}$\\
$7$ & $(147456z^3-12544z^2+224z-1)z^6$ &
$\{\frac{1}{16},\frac{1}{64},\frac{1}{144}\}$\\
$8$ &$(2822400z^4-826624z^3+31584z^2-336z+1)z^{7}$&
$\{\frac{1}{4},\frac{1}{36},\frac{1}{100},\frac{1}{196}\}$\\
$9$ &$(37748736z^4-3358720z^3+69888z^2-480z+1)z^{8}$&
$\{\frac{1}{16},\frac{1}{64},\frac{1}{144},\frac{1}{256}\}$\\
\hline
\end{tabular}
\centerline{}
\smallskip
\caption{\small We present the polynomials $q_{0,k}(z)$  and their
nonzero roots obtained by the MAPLE package {\tt GFUN}. }
\label{Table:polyroot}
\end{center}
\end{table}
observe that \cite{Reidys:08k}
\begin{equation}\label{E:theorem}
f_{k}(2n) \, \sim  \, \widetilde{c}_k  \, n^{-((k-1)^2+(k-1)/2)}\,
(2(k-1))^{2n},\qquad \widetilde{c}_k>0,\, k\ge 2.
\end{equation}
Equation~(\ref{E:theorem}) guarantees
that ${\bf F}_k(z)$ has the unique dominant singularity $\rho_k^2$, where
$\rho_k=1/(2k-2)$.
According to Lemma~\ref{L:k-D}, ${\bf F}_k(z)$ is $D$-finite, whence we have
analytic continuation in any simply connected domain containing zero avoiding
its singularities \cite{Wasow:87}. As a result ${\bf F}_k(z)$ is
$\Delta_{\rho_k^2}$ analytic as required by Theorem~\ref{T:transfer1b}.
Lemma~\ref{L:k-D} and eq.~(\ref{E:theorem}) put us in position to present
the singular expansion of ${\bf F}_k(z)$:
\begin{proposition}\label{P:fk}
For $2\le k\le 9$, the singular expansion of ${\bf F}_k(z)$
for $z\rightarrow \rho_k^2$ is given by
\begin{equation*}
\begin{split}
{\bf F}_k(z)=
\begin{cases}
P_k(z-\rho_k^2)+c_k'(z-\rho_k^2)^{((k-1)^2+(k-1)/2)-1}
\log^{}(z-\rho_k^2)\left(1+o(1)\right)  \\
P_k(z-\rho_k^2)+c_k'(z-\rho_k^2)^{((k-1)^2+(k-1)/2)-1}
\left(1+o(1)\right),
\end{cases}
\end{split}
\end{equation*}
depending on $k$ being odd or even. Furthermore, the terms $P_k(z)$ are
polynomials of degree not larger than $(k-1)^2+(k-1)/2-1$, $c_k'$ is
some constant, and $\rho_k=1/(2k-2)$.
\end{proposition}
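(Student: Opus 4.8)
The plan is to extract the singular expansion of ${\bf F}_k(z)$ directly from the ODE in eq.~(\ref{E:KJ}) together with the known asymptotic form eq.~(\ref{E:theorem}). First I would invoke Lemma~\ref{L:k-D}: since ${\bf F}_k(z)$ is $D$-finite, it has an analytic continuation to any simply connected domain avoiding the roots of $q_{0,k}(z)$, so in particular it is $\Delta_{\rho_k^2}$ analytic at $z=\rho_k^2$, and eq.~(\ref{E:theorem}) guarantees that $\rho_k^2 = 1/(2k-2)^2$ is the \emph{unique} dominant singularity (the other roots listed in Table~\ref{Table:polyroot}, which for $z$ after the matching substitution correspond to $\rho_j^2$ with $j<k$, have strictly larger modulus). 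This reduces the problem, via the scaling property eq.~(\ref{E:scaling}), to describing the local behavior of a $D$-finite function near a regular singular point of its defining ODE.

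Next I would apply the classical theory of linear ODEs with analytic coefficients near a regular singular point (Frobenius/Fuchs theory, as in \cite{Wasow:87}): the local solution space at $z=\rho_k^2$ is spanned by functions of the form $(z-\rho_k^2)^{\theta}\big(\log(z-\rho_k^2)\big)^{j}h(z)$, where $h$ is analytic at $\rho_k^2$, the exponents $\theta$ are the roots of the indicial equation, and logarithmic terms appear precisely when exponents differ by an integer. Hence ${\bf F}_k(z)$ admits an expansion of the shape $P_k(z-\rho_k^2) + (\text{singular part})$, with $P_k$ the analytic (polynomial-truncated) part. The exponent governing the leading singular term is then pinned down by eq.~(\ref{E:theorem}): by the transfer theorem (Theorem~\ref{T:transfer1}, applied after the $z\mapsto z/\gamma$ normalization of Theorem~\ref{T:transfer1b}), a coefficient asymptotics $n^{-((k-1)^2+(k-1)/2)}(2k-2)^{2n}$ forces the singular exponent to be $\big((k-1)^2+(k-1)/2\big)-1$. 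Since this exponent is a positive integer, part~(a) of Theorem~\ref{T:transfer1} would contribute no such logarithmic correction; the $\log$ therefore must be present exactly when the pure power $(z-\rho_k^2)^{m}$ with $m\in\mathbb{Z}_{\ge 0}$ would otherwise be analytic, i.e.\ its presence is detected by whether the indicial roots at $\rho_k^2$ are congruent modulo $1$. I would verify, case by case for $2\le k\le 9$ using the explicit polynomials $q_{0,k}(z)$ of Table~\ref{Table:polyroot} and the explicit recursions/ODEs produced by {\tt GFUN}, that the integer-exponent resonance — hence the logarithm — occurs exactly in the odd-$k$ cases and is absent for even $k$, which accounts for the two-case split. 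The degree bound $\deg P_k \le (k-1)^2+(k-1)/2-1$ follows because the singular exponent equals that number, so all lower-order terms are absorbed into the analytic part, which we truncate at that order.

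The main obstacle is the bookkeeping of the \emph{logarithm versus no logarithm} dichotomy: a priori Frobenius theory only tells us logarithms \emph{may} occur when indicial exponents are integer-separated, and one must check that the relevant connection coefficient $c_k'$ is genuinely nonzero (otherwise the stated leading term would be wrong) and that the parity pattern is exactly odd$\Leftrightarrow$log. This is not automatic and is where the finite case analysis $k=2,\dots,9$ is unavoidable: for each $k$ one computes the indicial equation of eq.~(\ref{E:KJ}) at $z=\rho_k^2$, identifies the exponent $((k-1)^2+(k-1)/2)-1$ among its roots, checks the integrality pattern of exponent differences, and confirms via eq.~(\ref{E:theorem}) (whose constant $\widetilde c_k>0$) that $c_k'\neq 0$. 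Everything else — $\Delta$-analyticity, uniqueness of the dominant singularity, the polynomial form of $P_k$ — is a direct consequence of $D$-finiteness and the already-cited transfer theorems.
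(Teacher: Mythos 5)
Your plan coincides with the paper's: the proof given in the SM likewise combines the $D$-finiteness of ${\bf F}_k(z)$ from Lemma~\ref{L:k-D}, the local structure theory of solutions of eq.~(\ref{E:KJ}) at $z=\rho_k^2$, and the known coefficient asymptotics eq.~(\ref{E:theorem}) to pin down the singular exponent $((k-1)^2+(k-1)/2)-1$, the nonvanishing of $c_k'$, and the odd/even logarithm dichotomy via Theorem~\ref{T:transfer1}. The one step you should state explicitly in your case-by-case verification is that $z=\rho_k^2$ is a \emph{regular} singular point of eq.~(\ref{E:KJ}) for each $2\le k\le 9$, since the Frobenius/Fuchs basis you invoke (and the exclusion of $\log^{j}$ terms with $j\ge 2$) is only available under that hypothesis.
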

We give the proof of Proposition~\ref{P:fk} is the SM.

In our derivations the following instance of the supercritical paradigm
\cite{Flajolet:07a} is of central importance: we
are given a $D$-finite function, $f(z)$ and an algebraic function
$g(u)$ satisfying $g(0)=0$. Furthermore we suppose that $f(g(u))$
has a unique real valued dominant singularity $\gamma$ and $g$ is
regular in a disc with radius slightly larger than $\gamma$. Then
the supercritical paradigm stipulates that the subexponential
factors of $f(g(u))$ at $u=0$, given that $g(u)$ satisfies certain
conditions, coincide with those of $f(z)$.

Theorem \ref{T:transfer1}, Theorem \ref{T:transfer1b} and
Proposition~\ref{P:fk} allow under certain conditions to obtain the
asymptotics of the coefficients of supercritical compositions of the
``outer'' function ${\bf{F}}_k(z)$ and ``inner'' function $\psi(z)$.

\begin{proposition}\label{P:algeasym}
Let $\psi(z)$ be an algebraic, analytic function in a
domain $\mathcal{D}=\{z||z|\leq r\}$ such that
$\psi(0)=0$. Suppose $\gamma$ is the unique
dominant singularity of ${\bf F}_k(\psi(z))$ and minimum positive real
solution of $\psi(\gamma)=\rho_k^2$, $|\gamma|<r$,
where $\psi'(\gamma)\neq 0$.
Then ${\bf F}_k(\psi(z))$ has a singular expansion and
\begin{equation}\label{E:LL}
[z^n]{\bf F}_k(\psi(z)) \sim A\,n^{-((k-1)^2+(k-1)/2)}
\left(\frac{1}{\gamma}\right)^n,
\end{equation}
where $A$ is some constant.
\end{proposition}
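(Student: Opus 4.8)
The plan is to prove Proposition~\ref{P:algeasym} as a direct application of the supercritical composition schema sketched above, building the singular expansion of ${\bf F}_k(\psi(z))$ from that of ${\bf F}_k$ given in Proposition~\ref{P:fk}. First I would verify that $z=\gamma$ is a genuine singularity of ${\bf F}_k\circ\psi$ of the right type. Since $\psi$ is algebraic and analytic on $\{|z|\le r\}$ with $\psi(0)=0$, and $\gamma<r$ is the minimal positive real solution of $\psi(\gamma)=\rho_k^2$ with $\psi'(\gamma)\neq 0$, the inverse function theorem gives a local analytic inverse of $\psi$ near $\gamma$; writing $w=\psi(z)$ and Taylor-expanding, $\rho_k^2-\psi(z)=\psi'(\gamma)(\gamma-z)(1+O(\gamma-z))$, so that $(\rho_k^2-\psi(z))^{\lambda}$ for the relevant exponent $\lambda=(k-1)^2+(k-1)/2-1$ behaves, up to a nonzero constant and an analytic unit, like $(\gamma-z)^{\lambda}$ near $z=\gamma$. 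Substituting into the two cases of Proposition~\ref{P:fk} (odd vs.\ even $k$) shows that in a slit neighborhood of $\gamma$,
\begin{equation*}
{\bf F}_k(\psi(z))=\widetilde{P}_k(\gamma-z)+\widetilde{c}_k\,(\gamma-z)^{\lambda}\,L(z)\,(1+o(1)),
\end{equation*}
where $\widetilde{P}_k$ is analytic (the image of the polynomial part $P_k$ under the analytic substitution, hence contributing no singular part of order $\le\lambda$), $L(z)$ is either $\log(\gamma-z)$ or $1$ according to the parity of $k$, and $\widetilde{c}_k\neq 0$.

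Second, I would check the hypotheses of the transfer theorem, i.e.\ that ${\bf F}_k(\psi(z))$ is $\Delta_\gamma$-analytic and that $\gamma$ is its \emph{unique} dominant singularity — the latter is assumed in the statement, but one still must confirm no other singularity lies on $|z|=\gamma$ and that $\psi$ contributes none. Because $\psi$ is analytic on the closed disc of radius $r>\gamma$, all singularities of the composition on $|z|=\gamma$ must come from points where $\psi$ hits the singular set of ${\bf F}_k$; by the uniqueness assumption $\gamma$ is the only such point. For $\Delta$-analyticity: ${\bf F}_k$ is $D$-finite (Lemma~\ref{L:k-D}) hence continuable along any path avoiding the finitely many roots in $R_k$, and $\psi$ is analytic on a disc strictly larger than $\gamma$, so the composition continues to a domain $\Delta_\gamma(\phi,r')$ for suitable $\phi<\pi/2$, $r'>\gamma$. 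Then Theorem~\ref{T:transfer1b} applies (after the rescaling \eqref{E:scaling} normalizing $\gamma\to1$), transferring the singular expansion coefficientwise: $[z^n]{\bf F}_k(\psi(z))=\Theta([z^n](\gamma-z)^{\lambda}L(z))$. Finally, by Theorem~\ref{T:transfer1}(a) in the non-log (even) case and Theorem~\ref{T:transfer1}(b)-type logarithmic scaling combined with (a) in the odd case, $[z^n](1-z/\gamma)^{\lambda}\log^{\varepsilon}(\tfrac{1}{1-z/\gamma})\sim \mathrm{const}\cdot n^{-\lambda-1}$ for $\varepsilon\in\{0,1\}$ — the logarithm only changes the constant, not the polynomial order — and $-\lambda-1=-((k-1)^2+(k-1)/2)$, yielding \eqref{E:LL} with $A=\widetilde{c}_k/\Gamma(-\lambda)$ (suitably interpreted in the log case).

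The main obstacle I anticipate is not the transfer step but a careful justification that the analytic substitution $w\mapsto z$ genuinely preserves the order of the singular term — specifically that the polynomial part $P_k(w-\rho_k^2)$ of ${\bf F}_k$ composed with $\psi$ cannot, through the substitution, generate a spurious singular contribution that interferes with or dominates the $(\gamma-z)^{\lambda}$ term, and conversely that the leading singular term does not degenerate (this is exactly where $\psi'(\gamma)\neq0$ is essential, ensuring the local change of variable is a biholomorphism and $(\rho_k^2-\psi(z))^\lambda$ retains a nonvanishing coefficient). One must also handle the $o(1)$ error term from Proposition~\ref{P:fk} uniformly in the slit neighborhood so that it survives composition with $\psi$ and is still $o(1)$ as $z\to\gamma$ in $\Delta_\gamma$; this follows from the uniformity built into the statement of Proposition~\ref{P:fk} together with continuity of $\psi$, but deserves an explicit remark. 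Everything else — the parity bookkeeping, the explicit constant, and the rescaling to $\gamma=1$ — is routine given the machinery already assembled in Section~\ref{S:basics}.
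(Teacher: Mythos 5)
Your proposal is correct and follows exactly the argument the paper intends: compose the singular expansion of ${\bf F}_k$ from Proposition~\ref{P:fk} with the locally biholomorphic substitution $w=\psi(z)$ (using $\psi'(\gamma)\neq 0$ so that $(\rho_k^2-\psi(z))^{\lambda}\sim\mathrm{const}\cdot(\gamma-z)^{\lambda}$), verify $\Delta$-analyticity via $D$-finiteness and the analyticity of $\psi$ on a disc of radius $r>\gamma$, and then transfer via Theorems~\ref{T:transfer1} and~\ref{T:transfer1b}, with the exponent bookkeeping $-\lambda-1=-((k-1)^2+(k-1)/2)$ and the parity split handling the logarithmic case. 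Your cautionary remarks about the polynomial part and the uniformity of the $o(1)$ term are exactly the right points to flag, and the argument is sound.
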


\subsection{Shapes}

\begin{definition}
A ${\sf V}_k$-shape is a $k$-noncrossing matching having stacks of length
exactly one.
\end{definition}
In the following we refer to ${\sf V}_k$-shape simply as shapes.
That is, given a modular, $k$-noncrossing diagram, $\delta$, its shape 
is obtained by first replacing each stem by an arc and then removing all 
isolated vertices, see Fig.~\ref{F:Ikmapchain}.
\begin{figure}[ht]
\centerline{\includegraphics[width=0.65\textwidth]{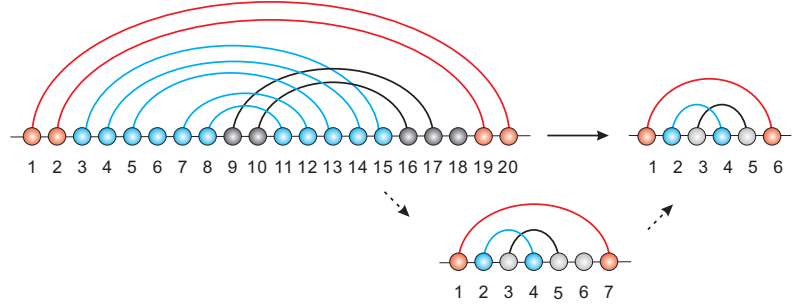}}
\caption{\small From diagrams to shapes: A
modular, $3$-noncrossingd diagram (top-left) is mapped in two steps
into its ${\sf V}_3$-shape (top-right). A stem (blue) is replaced by 
an single shape-arc (blue).}
\label{F:Ikmapchain}
\end{figure}

Let ${\mathcal I}_k(s,m)$ ($i_k(s,m)$) denote the set (number) of
the ${\sf V}_k$-shapes with $s$ arcs and $m$ $1$-arcs having the bivariate
generating function
\begin{equation}
{\bf I}_k(z,u)=\sum_{s\geq0}\sum_{m=0}^{s} i_k(s,m)z^su^m.
\end{equation}
The bivariate generating function of $i_k(s,m)$ and the
generating function of ${\bf F}_k(z)$ are related as follows:
\begin{lemma}\label{T:gfIk}\cite{Reidys:09shape}
Let $k$ be natural number where $k\geq 2$, then the generating
function ${\bf I}_k(z,u)$ satisfy
\begin{eqnarray}\label{E:gfIk}
{\bf I}_k(z,u) & = & \frac{1+z}{1+2z-zu}
{\bf F}_k\left(\frac{z(1+z)}{(1+2z-zu)^2}\right).
\end{eqnarray}
\end{lemma}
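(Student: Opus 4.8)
**The plan is to establish a bijective-combinatorial identity between $\mathsf{V}_k$-shapes and $k$-noncrossing matchings, refined by the number of $1$-arcs, and then translate it into the generating-function identity via standard symbolic manipulations.**

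First I would recall that a $\mathsf{V}_k$-shape is exactly a $k$-noncrossing matching in which every stack has length $1$, i.e.\ no two arcs are "parallel" (there is no pair $(i,j),(i+1,j-1)$ both present). The natural idea is to go the other way: starting from an \emph{arbitrary} $k$-noncrossing matching $M$ on $2s'$ vertices, one can collapse each maximal stack $S^{\sigma}_{i,j}$ to a single arc, producing a $\mathsf{V}_k$-shape. Conversely, to reconstruct the matchings that collapse to a given shape with $s$ arcs, one inflates each shape-arc into a stack of some length $\sigma\ge 1$. The subtlety — and this is where the exponent $u$ marking $1$-arcs enters — is that adjacent inflated stacks must not merge back into a longer stack, so the combinatorics of "where a stack of length $1$ is allowed" must be tracked. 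The factor $u$ records precisely those shape-arcs that remain $1$-arcs (stacks of length one) after a further contraction step, which is what forces the somewhat delicate rational prefactor.

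Concretely, I would fix the number of arcs $s$ in the shape and write $f_k(2s')$, the number of $k$-noncrossing matchings on $s'$ arcs, as a sum over shapes weighted by the ways to inflate: each of the $s$ shape-arcs is blown up into a stack of length $\ge 1$, contributing a factor accounting for the stack lengths, subject to the non-merging constraint. This is a transfer-matrix / composition-of-generating-functions computation: replacing the variable $z$ of ${\bf F}_k$ by an appropriate algebraic substitution $\vartheta(z)$ encodes "inflate every arc into a stack," and the extra univariate factor $(1+z)/(1+2z-zu)$ bookkeeps the isolated-$1$-arc corrections together with the $u$-marking. I would determine $\vartheta(z)=\frac{z(1+z)}{(1+2z-zu)^2}$ and the prefactor by setting up the inflation as a product over arcs of a local generating function and then simplifying; the cleanest route is to verify that the proposed right-hand side, when expanded, satisfies the same defining recursion that $i_k(s,m)$ does, or equivalently to cite the explicit combinatorial construction from \cite{Reidys:09shape}.

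**The main obstacle** will be correctly accounting for the non-merging constraint: when two shape-arcs are "immediately nested" (a configuration $(i,i+1,\dots)$ in the shape), inflating both of them independently could accidentally create a single longer stack, so those configurations must be excluded or corrected, and this is exactly the arc-configuration subtlety the introduction flags as requiring "special attention during the inflation process." Handling it uniformly in $k$ — so that the correction is independent of $k$ and lands in the clean rational factor $\frac{1+z}{1+2z-zu}$ rather than inside ${\bf F}_k$ — is the heart of the argument. Once the substitution and prefactor are pinned down, setting $u=1$ would recover the univariate shape generating function as a consistency check, and the identity for general $u$ then follows by the same reasoning with the $1$-arc marking carried along. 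Since this lemma is quoted from \cite{Reidys:09shape}, in the paper itself I would give the inflation construction in enough detail to make the bijection transparent and then refer to that reference for the full verification.
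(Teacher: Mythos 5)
The paper does not prove this lemma at all --- it quotes it from \cite{Reidys:09shape} --- so your proposal has to stand on its own, and it has a genuine gap at its core. The inflation/collapse correspondence you describe is correct, but it only yields the $u=1$ specialization. Collapsing each maximal stack of a $k$-noncrossing matching produces a ${\sf V}_k$-shape, and --- contrary to your stated ``main obstacle'' --- no merging can occur when you invert this: in a shape two arcs are never immediately nested, since in a matching ``immediately nested'' forces them to be parallel, i.e.\ a $2$-stack, which is excluded; hence between any two nested shape-arcs there is another arc-endpoint and the inflated stacks remain separated. The resulting bijection (matching $=$ shape together with one stack-length $\geq 1$ per shape-arc) weights every shape-arc identically by $x/(1-x)$ and therefore gives ${\bf F}_k(x)=\sum_{s}\bigl(\sum_m i_k(s,m)\bigr)\left(x/(1-x)\right)^s$, equivalently ${\bf I}_k(z,1)={\bf F}_k\left(z/(1+z)\right)$, which is exactly eq.~(\ref{E:gfIk}) at $u=1$ and nothing more. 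Since the number of $1$-arcs is an invariant of the shape alone and the inflation treats all shape-arcs alike, ``carrying the $1$-arc marking along'' through this bijection cannot separate the coefficients $i_k(s,m)$ for different $m$: one univariate functional equation cannot determine a bivariate generating function.

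What is actually required --- and what the cited reference, like the analogous Propositions~\ref{P:trivariate} and~\ref{P:5tri} of this paper, supplies --- is a second, independent combinatorial relation connecting $i_k(s,m)$ across different values of $m$ (obtained by deleting or inserting a distinguished $1$-arc), which translates into a partial differential equation in $u$; the closed form in eq.~(\ref{E:gfIk}) is then checked to satisfy that PDE, the support condition $i_k(s,m)=0$ for $m>s$, and the $u=1$ boundary condition, and uniqueness follows by induction on $s$. You allude to verifying ``the same defining recursion that $i_k(s,m)$ does,'' but you never state or derive such a recursion, and that derivation is the entire content of the lemma. Your misidentification of the difficulty compounds this: the ``special attention during the inflation process'' flagged in the introduction concerns inflating shapes into \emph{modular diagrams}, where stack-length and minimum arc-length constraints interact (Theorem~\ref{T:queen}), not the shape--matching correspondence used here. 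As written, your plan establishes only the $u=1$ case of the statement.
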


\subsection{Symbolic enumeration}
In the following we will compute the generating functions via the
symbolic enumeration method \cite{Flajolet:07a}. For this purpose we
need the notion of a combinatorial class. A combinatorial class
$(\mathcal{C},w_{\mathcal{C}})$ is a set together with a
size-function, $w_{\mathcal{C}}\colon \mathcal{C}\longrightarrow
\mathbb{Z}^+$ such that $\mathcal{C}_n=w_{\mathcal{C}}^{-1}(n)$ is
finite for any $n\in\mathbb{Z}^+$. We write $w$ instead of
$w_{\mathcal{C}}$ and set $C_n=\vert \mathcal{C}_n\vert$. Two
special combinatorial classes are $\mathcal{E}$ and $\mathcal{Z}$
which contain only one element of size $0$ and $1$, respectively.
The generating function of a combinatorial class $\mathcal{C}$ is
given by
\begin{equation}
{\bf{C}}(z)=\sum_{c\in\mathcal{C}}z^{w(c)}=\sum_{n\geq 0}C_n\, z^n,
\end{equation}
where $\mathcal{C}_n\subset \mathcal{C}$. In particular, the
generating functions of the classes $\mathcal{E}$ and $\mathcal{Z}$
are ${\bf{E}}(z)=1$ and ${\bf{Z}}(z)=z$. Suppose
$\mathcal{C},\mathcal{D}$ are combinatorial classes. Then
$\mathcal{C}$ is isomorphic to $\mathcal{D}$,
$\mathcal{C}\cong\mathcal{D}$, if and only if $\forall\, n\geq
0$,$|\mathcal{C}_n|=|\mathcal{D}_n|$. In the following we shall
identify isomorphic combinatorial classes and write
$\mathcal{C}=\mathcal{D}$ if $\mathcal{C}\cong\mathcal{D}$. We set
\begin{itemize}
\item $\mathcal{C}+\mathcal{D}:=\mathcal{C}\cup\mathcal{D}$, if
$\mathcal{C}\cap\mathcal{D}=\varnothing$ and for
$\alpha\in\mathcal{C}+\mathcal{D}$,
\begin{equation}
w_{\mathcal{C}+\mathcal{D}}(\alpha)=\left\{
\begin{aligned}
&w_{\mathcal{C}}(\alpha)\quad \text{if}\ \alpha\in\mathcal{C}\\
&w_{\mathcal{D}}(\alpha)\quad \text{if}\ \alpha\in\mathcal{D}.
\end{aligned} \right.
\end{equation}
\item $\mathcal{C}\times\mathcal{D}:=\{\alpha=(c,d)\mid
c\in\mathcal{C},d\in\mathcal{D}\}$ and for
$\alpha\in\mathcal{C}\times\mathcal{D}$,
\begin{equation}
w_{\mathcal{C}\times\mathcal{D}}((c,d))=
w_{\mathcal{C}}(c)+w_{\mathcal{D}}(d).
\end{equation}
\end{itemize}
and furthermore $\mathcal{C}^m:=\prod_{h=1}^m\mathcal{C}$ and
$\textsc{Seq}(\mathcal{C}):={\mathcal{E}}+\mathcal{C}+\mathcal{C}^2+\cdots$.
Plainly, $\textsc{Seq}(\mathcal{C})$ is a combinatorial class if and
only if there is no element in $\mathcal{C}$ of size $0$.
We immediately observe
\begin{proposition}
Suppose $\mathcal{A}$, $\mathcal{C}$ and $\mathcal{D}$ are
combinatorial classes with generating functions ${\bf{A}(z)}$,
${\bf{C}}(z)$ and ${\bf{D}}(z)$. Then\\
{\sf (a)}  $\mathcal{A}=\mathcal{C}+\mathcal{D}\Longrightarrow
{\bf{A}}(z)={\bf{C}}(z)+{\bf{D}}(z)$\\
{\sf (b)} $\mathcal{A}=\mathcal{C}\times\mathcal{D}\Longrightarrow
{\bf{A}}(z)={\bf{C}}(z)\cdot {\bf{D}}(z)$\\
{\sf (c)} $\mathcal{A}=\textsc{Seq}(\mathcal{C})\Longrightarrow
{\bf{A}}(z)=\frac{1}{1-{\bf{C}}(z)}$.
\end{proposition}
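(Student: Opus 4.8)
The plan is to prove all three parts by coefficient extraction, that is, by comparing, for each fixed $n$, the number of objects of size $n$ on the two sides; since we identify isomorphic combinatorial classes, each asserted identity is really a statement about the sequences $(A_n)_{n\ge 0}$, equivalently about formal power series. For part \textsf{(a)}, the hypothesis $\mathcal{C}\cap\mathcal{D}=\varnothing$ together with the definition of $w_{\mathcal{C}+\mathcal{D}}$ (which restricts to $w_{\mathcal{C}}$ on $\mathcal{C}$ and to $w_{\mathcal{D}}$ on $\mathcal{D}$) gives $(\mathcal{C}+\mathcal{D})_n=\mathcal{C}_n\,\dot\cup\,\mathcal{D}_n$, a disjoint union, hence $A_n=C_n+D_n$ for all $n$, and summing $A_n z^n$ yields ${\bf A}(z)={\bf C}(z)+{\bf D}(z)$.

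For part \textsf{(b)}, the definition $w_{\mathcal{C}\times\mathcal{D}}((c,d))=w_{\mathcal{C}}(c)+w_{\mathcal{D}}(d)$ gives the decomposition $(\mathcal{C}\times\mathcal{D})_n=\bigcup_{i+j=n}\mathcal{C}_i\times\mathcal{D}_j$ into finitely many disjoint blocks, so $A_n=\sum_{i=0}^{n}C_i\,D_{n-i}$. This is exactly the $n$-th coefficient of the Cauchy product of ${\bf C}(z)$ and ${\bf D}(z)$, whence ${\bf A}(z)={\bf C}(z)\cdot{\bf D}(z)$.

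For part \textsf{(c)}, I would first observe that, because $\mathcal{C}$ has no element of size $0$, every object of $\mathcal{C}^m$ has size at least $m$; therefore, for a fixed $n$, only the summands $\mathcal{C}^m$ with $m\le n$ contribute to size $n$, which shows that $\textsc{Seq}(\mathcal{C})_n$ is finite and $\textsc{Seq}(\mathcal{C})$ is a bona fide combinatorial class. Iterating \textsf{(b)} shows that $\mathcal{C}^m$ has generating function ${\bf C}(z)^m$, and applying \textsf{(a)} to the (now locally finite) disjoint union $\mathcal{E}+\mathcal{C}+\mathcal{C}^2+\cdots$ gives ${\bf A}(z)=\sum_{m\ge 0}{\bf C}(z)^m$. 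Since ${\bf C}(0)=0$, this geometric series is summable in the ring of formal power series and equals $\frac{1}{1-{\bf C}(z)}$.

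The only genuine point requiring care is the admissibility of the infinite construction in \textsf{(c)}: one must verify that each size class of $\textsc{Seq}(\mathcal{C})$ is finite (equivalently, that the formal geometric series converges coefficientwise), and this is precisely where the hypothesis $C_0=0$ is used. Everything else is routine bookkeeping with coefficients, so I do not expect any real obstacle beyond stating these finiteness observations cleanly.
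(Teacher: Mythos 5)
Your proof is correct and is exactly the standard argument the paper has in mind: the paper states this proposition without proof (prefacing it with ``We immediately observe'') and has already recorded the needed admissibility remark that $\textsc{Seq}(\mathcal{C})$ is a combinatorial class iff $\mathcal{C}$ has no element of size $0$. Your coefficientwise verification of (a) and (b) and the formal geometric-series summation in (c), using $C_0=0$ for local finiteness, fill in precisely the routine details being omitted.
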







\section{Modular, noncrossing diagrams}\label{S:k=2}


Let us begin by studying first the case $k=2$ \cite{Schuster:98},
where the asymptotic formula
\begin{equation*}
{\sf Q}_2(n)\sim 1.4848\cdot n^{-3/2}\cdot 1.8489^n,
\end{equation*}
has been derived. In the following we extend the result in \cite{Schuster:98} 
by computing the generating function explicitely. The above asymptotic formula 
follows then easily by means of singularity analysis.

\begin{proposition}\label{P:k=2}
The generating function of modular, noncrossing diagrams is given by
\begin{equation}\label{E:UU}
{\bf Q}_2(z)=\frac{1-z^2+z^4}{1 - z - z^2 + z^3 + 2 z^4 + z^6}\cdot {\bf F}_2
\left(\frac{z^4 - z^6 + z^8}{(1 - z - z^2 + z^3 + 2 z^4 + z^6)^2}\right)
\end{equation}
and the coefficients ${\sf Q}_2(n)$ satisfy
\begin{equation*}
{\sf Q}_2(n)\sim c_2 n^{-3/2} \gamma_{2}^{-n},
\end{equation*}
where $\gamma_{2}$ is the minimal, positive real solution
of $\vartheta(z)=1/4$, and
\begin{equation}
\vartheta(z)=
\frac{z^4 - z^6 + z^8}{(1 - z - z^2 + z^3 + 2 z^4 + z^6)^2}.
\end{equation}
Here we have $\gamma_{2}\approx 1.8489$ and
$c_2\approx 1.4848$.
\end{proposition}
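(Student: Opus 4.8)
The plan is to realize a modular noncrossing diagram as an inflation of its $\sf V_2$-shape, so that Lemma~\ref{T:gfIk} (which relates ${\bf I}_2(z,u)$ to ${\bf F}_2$) can be fed a suitably weighted substitution. First I would describe the map $\delta\mapsto$ shape explicitly for $k=2$: collapse each stem to a single arc and delete the isolated vertices; the fibers of this map are the ``stem-modules'' one may insert over each shape-arc, together with sequences of isolated vertices placed in the gaps. The key bookkeeping is that a shape-arc which is a $1$-arc in the shape cannot be inflated the same way as a shape-arc of length $\ge 2$, because the minimum-arc-length-four and no-isolated-arc constraints interact with the hairpin loop differently; this is precisely why the bivariate generating function ${\bf I}_2(z,u)$, with $u$ marking $1$-arcs, is needed rather than the univariate ${\bf F}_2$.

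Next I would set up the combinatorial classes via the symbolic method of Section~\ref{S:basics}. A stem-module over a shape-arc is a stack of length $\ge 2$ (generating function $\frac{z^2\cdot z^2}{1-z^2}=\frac{z^4}{1-z^2}$ per arc for the ``inner'' part, since a stack contributes pairs of vertices) iterated into a stem by nesting further stacks with interspersed isolated vertices; the hairpin at the innermost position forces at least some isolated vertices to respect arc length $\ge 4$. Carrying out this symbolic computation, one obtains two substitution functions: a ``vertex-weight'' function $w(z)$ replacing each shape-vertex-pair, and the argument $\vartheta(z)$ of ${\bf F}_2$. Matching against Lemma~\ref{T:gfIk} with the correct specialization of $u$ (the $1$-arc marker gets a different stem-module because its hairpin is tighter) should yield exactly
\begin{equation*}
{\bf Q}_2(z)=\frac{1-z^2+z^4}{1 - z - z^2 + z^3 + 2 z^4 + z^6}\cdot {\bf F}_2\!\left(\vartheta(z)\right),\qquad
\vartheta(z)=\frac{z^4 - z^6 + z^8}{(1 - z - z^2 + z^3 + 2 z^4 + z^6)^2}.
\end{equation*}
I would verify this formula by expanding both sides as power series (comparing low-order coefficients ${\sf Q}_2(n)$ against a direct count) to catch sign or off-by-one errors in the loop constraints.

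For the asymptotics I would invoke Proposition~\ref{P:algeasym} with $\psi=\vartheta$: $\vartheta$ is rational hence algebraic and analytic near $0$ with $\vartheta(0)=0$, so it suffices to locate $\gamma_2$ as the minimal positive real solution of $\vartheta(z)=\rho_2^2=1/4$ (using $\rho_2=1/(2\cdot2-2)=1/2$, consistent with Table~\ref{Table:polyroot}), check $\gamma_2<r$ for the radius of analyticity, and confirm $\vartheta'(\gamma_2)\ne 0$ and that this is the \emph{unique} dominant singularity of ${\bf F}_2(\vartheta(z))$. The prefactor $\frac{1-z^2+z^4}{1-z-z^2+z^3+2z^4+z^6}$ is analytic at $\gamma_2$ provided its denominator does not vanish there, which I would check numerically; then its value merely rescales the constant. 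Proposition~\ref{P:algeasym} gives $[z^n]{\bf F}_2(\vartheta(z))\sim A\,n^{-((2-1)^2+(2-1)/2)}\gamma_2^{-n}=A\,n^{-3/2}\gamma_2^{-n}$, and multiplying by the analytic prefactor preserves the exponent, yielding ${\sf Q}_2(n)\sim c_2\,n^{-3/2}\gamma_2^{-n}$. Finally I would solve $\vartheta(z)=1/4$ numerically to get $\gamma_2\approx1.8489$ and compute $c_2\approx1.4848$ from the singular expansion of ${\bf F}_2$ in Proposition~\ref{P:fk} together with the chain-rule factor $\vartheta'(\gamma_2)$ and the prefactor value at $\gamma_2$.

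I expect the main obstacle to be the symbolic derivation of the stem-module generating functions with the correct handling of the two exceptional configurations: the $1$-arc shape-arc (whose hairpin must accommodate the length-$\ge 4$ constraint tightly) and the book-keeping of isolated vertices that may sit \emph{inside} versus \emph{between} stems without creating an isolated arc. Getting the polynomial $1-z-z^2+z^3+2z^4+z^6$ to emerge correctly — in particular the coefficient $2$ on $z^4$, which signals two distinct ways a length-four configuration can arise — is the delicate point; an error here would propagate into both $\vartheta$ and the prefactor. Once the generating function identity is pinned down and cross-checked against small cases, the singularity analysis is a routine application of Proposition~\ref{P:algeasym}.
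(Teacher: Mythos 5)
Your proposal follows essentially the same route as the paper: partition $\mathcal{Q}_2$ by the shape map, build the fiber generating function ${\bf Q}_\gamma(z)$ symbolically from stems (stacks of length $\ge 2$ with induced stacks) and isolated vertices, with the $1$-arcs marked by $u$ forcing at least three isolated vertices in their hairpin loop (so $u\mapsto z^3$), then substitute into Lemma~\ref{T:gfIk} and apply Proposition~\ref{P:algeasym} for the asymptotics. The only slight imprecision is attributing the special treatment of $1$-arcs to a ``different stem-module'' rather than to the mandatory insertion of isolated vertices in Step II, but the mechanism you describe is otherwise the paper's.
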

\begin{proof}
Let ${\mathcal Q}_{2}$ denote the set of modular noncrossing diagrams,
${\mathcal I}_2$ the set of all ${\sf V}_2$-shapes and ${\mathcal I}_2(m)$ 
those having exactly $m$ $1$-arcs. Then we have the surjective map
\begin{equation*}
\varphi\colon {\mathcal Q}_{2}\rightarrow {\mathcal I}_2.
\end{equation*}
The map $\varphi$ is obviously surjective, inducing the partition 
${\mathcal Q}_{2}=\dot\cup_\gamma\varphi^{-1}(\gamma)$, where 
$\varphi^{-1}(\gamma)$ is the preimage set of shape $\gamma$ under 
the map $\varphi$. Accordingly, we arrive at
\begin{equation}\label{E:Hgf}
{\bf Q}_{2}(z) =
\sum_{m\geq 0}\sum_{\gamma\in\,{\mathcal I}_2(m)}
\mathbf{Q}_{\gamma}(z).
\end{equation}
We proceed by computing the generating function $\mathbf{Q}_{\gamma}(z)$.
We shall construct $\mathbf{Q}_{\gamma}(z)$ from certain combinatorial classes
as ``building blocks''. The latter are: $\mathcal{M}$ (stems),
$\mathcal{K}$ (stacks), $\mathcal{N}$ (induced stacks),
$\mathcal{L}$ (isolated vertices), $\mathcal{R}$ (arcs) and $\mathcal{Z}$
(vertices), where $\mathbf{Z}(z)=z$ and $\mathbf{R}(z)=z^2$.
We inflate $\gamma\in {\mathcal I}_2(m)$ having $s$ arcs, where
$s\geq \max\{1,m\}$, to a modular noncrossing diagram in two steps.\\
{\bf Step I:} we inflate any shape-arc to a stack of length at least
$2$ and subsequently add additional stacks. The latter are called
induced stacks \index{induced stack} and have to be separated by means of
inserting isolated vertices, see Fig.~\ref{F:addstack}.
\begin{figure}[ht]
\centerline{\includegraphics[width=0.9\textwidth]{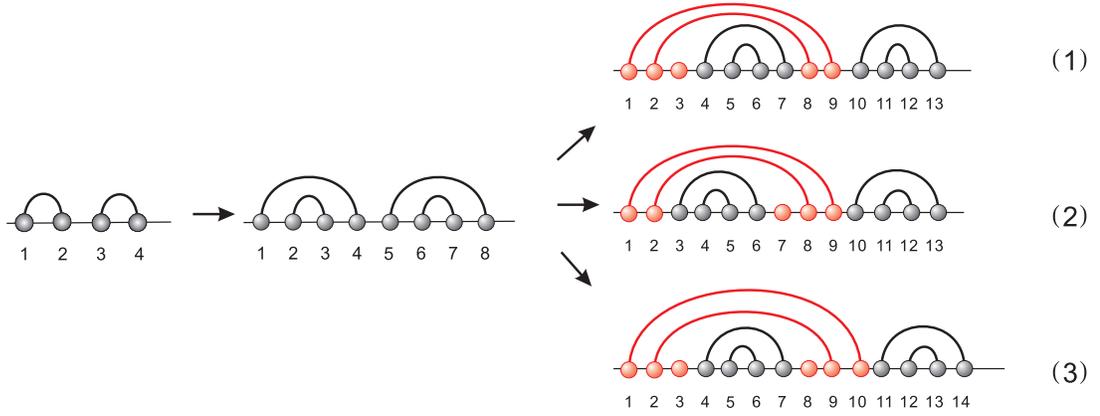}}
\caption{\small A shape (left) is inflated to a noncrossing, canonical diagram.}
\label{F:addstack}
\end{figure}
Note that during this first inflation step no intervals of isolated
vertices, other than those necessary for separating the nested stacks
are inserted. We generate
\begin{itemize}
\item sequences of isolated vertices 
$\mathcal{L}= \textsc{Seq}(\mathcal{Z})$, where
\begin{eqnarray*}
 {\bf L}(z) & = &  \frac{1}{1-z}
\end{eqnarray*}
\item stacks, i.e.
\begin{equation*}
\mathcal{K}=
\mathcal{R}^{2}\times\textsc{Seq}\left(\mathcal{R}\right)
\end{equation*}
with the generating function
\begin{eqnarray*}
\mathbf{K}(z) & = & z^{4}\cdot \frac{1}{1-z^2},
\end{eqnarray*}
\item induced stacks, i.e.~stacks together with at least one nonempty
interval of isolated vertices on either or both its sides.
\begin{equation*}
\mathcal{N}=\mathcal{K}\times
\left(\mathcal{Z}\times\mathcal{L}
+\mathcal{Z}\times\mathcal{L}+\left(\mathcal{Z}\times
\mathcal{L}\right)^2\right)
\end{equation*}
with generating function
\begin{equation*}
\mathbf{N}(z)=\frac{z^{4}}{1-z^2}\left(2\frac{z}{1-z}
+\left(\frac{z}{1-z}\right)^2\right),
\end{equation*}
\item stems\index{stem}, that is pairs consisting of a stack $\mathcal{K}$
and an arbitrarily long sequence of induced stacks
\begin{equation*}
\mathcal{M}=\mathcal{K}
\times \textsc{Seq}\left(\mathcal{N}\right)
\end{equation*}
with generating function
\begin{eqnarray*}
\mathbf{M}(z)=\frac{\mathbf{K}(z)}{1-\mathbf{N}(z)}=
\frac{\frac{z^{4}}{1-z^2}}
{1-\frac{z^{4}}{1-z^2}\left(2\frac{z}{1-z}
+\left(\frac{z^3}{1-z}\right)^2\right)}.
\end{eqnarray*}
\end{itemize}
{\bf Step II:} we insert additional isolated vertices at the
remaining $(2s+1)$ positions. For each $1$-arc at least three such
isolated vertices are necessarily inserted, see
Fig.~\ref{F:addvertex}.
\begin{figure}[ht]
\centerline{\includegraphics[width=1\textwidth]{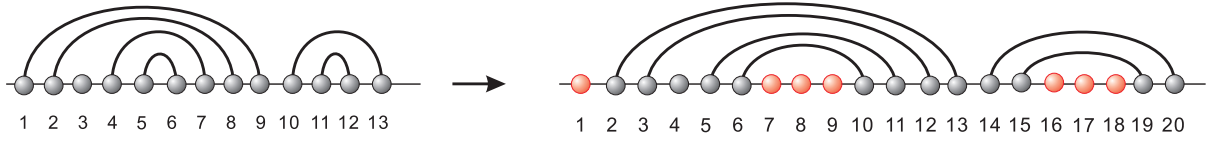}}
\caption{\small Step II: the noncrossing canonical diagram (left) 
obtained in {\sf (1)} in Fig.~\ref{F:addstack} is inflated to a 
modular noncrossing diagram (right) by adding isolated vertices (red).}
\label{F:addvertex}
\end{figure}
Combining Step I and Step II we arrive at
\begin{equation}
\mathcal{Q}_{\gamma}=\left(\mathcal{M}\right)^s
\times\mathcal{L}^{2s+1-m}\times\left(\mathcal{Z}^3\times
\mathcal{L}\right)^{m},
\end{equation}
where $\mathcal{Q}_{\gamma}$ is the combinatorial class of modular
noncrossing diagrams having shape $\gamma$. Therefore,
\begin{eqnarray*}
{\bf Q}_{\gamma}(z)&=&
\left(\frac{\frac{z^{4}}{1-z^2}}
{1-\frac{z^{4}}{1-z^2}\left(2\frac{z}{1-z}
+\left(\frac{z}{1-z}\right)^2\right)}\right)^s
\left(\frac{1}{1-z}\right)^{2s+1-m}
\left(\frac{z^3}{1-z}\right)^{m}\nonumber\\
&=&(1-z)^{-1}\left(\frac{z^{4}}{(1-z^2)(1-z)^2-(2z-z^2)
z^{4}}\right)^s \, (z^3)^m.
\end{eqnarray*}
Since for any $\gamma,\gamma_1\in \mathcal{I}_2(s,m)$
\index{$\mathcal{I}_k(s,m)$}
we have $\mathbf{Q}_{\gamma}(z)=\mathbf{Q}_{\gamma_1}(z)$, we derive
\begin{equation*}
{\bf Q}_{2}(z) = \sum_{m\geq 0}\sum_{\gamma\in\,{\mathcal
I}_2(m)} \mathbf{Q}_\gamma(z) =
\sum_{s\geq 0}\sum_{m=0}^s i_2(s,m)\mathbf{Q}_\gamma(z).
\end{equation*}
We set
\begin{equation*}\label{E:eta}
\eta(z)=\frac{z^{4}}{(1-z^2)(1-z)^2-(2z-z^2)z^{4}}
\end{equation*}
and note that Lemma~\ref{T:gfIk} guarantees
\begin{eqnarray*}
\sum_{s\geq0}\,\sum_{m=0}^{s}
\,i_2(s,m)\,x^s\,y^m & = & \frac{1+x}{1+2x-xy}\sum_{s\geq
0}f_2(2s,0)\left(\frac{x(1+x)}{(1+2x-xy)^2}\right)^s.
\end{eqnarray*}
Therefore, setting $x=\eta(z)$ and $y=z^3$ we arrive at
\begin{eqnarray*}
{\bf Q}_2(z)=\frac{1-z^2+z^4}{1 - z - z^2 + z^3 + 2 z^4 + z^6}\cdot {\bf F}_2
\left(\frac{z^4 - z^6 + z^8}{(1 - z - z^2 + z^3 + 2 z^4 + z^6)^2}\right)
\end{eqnarray*}
By Lemma~\ref{L:k-D}, ${\bf Q}_2(z)$ is $D$-finite. Pringsheim's
Theorem \cite{Tichmarsh:39} guarantees that ${\bf Q}_2(z)$ has a
dominant real positive singularity $\gamma_{2}$. We verify
that $\gamma_{2}$ which is the unique solution of minimum
modulus of the equation $\vartheta(z)=\rho_2^2$, where  $\rho_2^2$ is the unique
dominant singularity of ${\bf F}_2(z)$ and $\rho_2=1/2$. 
Furthermore we observe that
$\gamma_{2}$ is the unique dominant singularity of
${\bf Q}_2(z)$, see the SM.
It is straightforward to verify that
$\vartheta'(\gamma_{2})\neq 0$. According to
Proposition~\ref{P:algeasym}, we therefore have
\begin{equation*}
{\sf Q}_2(n)\sim c_2 n^{-3/2} \gamma_{2}^{-n},
\end{equation*}
and the proof of Proposition~\ref{P:k=2} is complete.
\end{proof}


\section{Colored shapes}\label{S:color}

In the following we shall assume that $k>2$, unless stated otherwise.
The key to compute the generating function of modular $k$-noncrossing diagrams
are certain refinements of their ${\sf V}_k$-shapes. These refined shapes are 
called colored shapes and obtained by distinguishing a variety of crossings 
of $2$-arcs, i.e.~arcs of the form $(i,i+2)$. Each such class requires its 
specific inflation-procedure in Theorem~\ref{T:queen}.

Let us next have a closer look at these combinatorial classes (colors):
\begin{itemize}
\item $\mathbf{C}_1$\index{$\mathbf{C}_1$} the class of of $1$-arcs,
\item $\mathbf{C}_2$\index{$\mathbf{C}_2$}
      the class of arc-pairs consisting of mutually crossing
      $2$-arcs,
\item $\mathbf{C}_3$\index{$\mathbf{C}_3$} the class of arc-pairs
      $(\alpha,\beta)$ where $\alpha$ is the unique $2$-arc crossing
      $\beta$ and $\beta$ has length at least three.
\item $\mathbf{C}_4$\index{$\mathbf{C}_4$}
      the class of arc-triples $(\alpha_1,\beta,\alpha_2)$, where
      $\alpha_1$ and $\alpha_2$ are $2$-arcs that cross $\beta$.
\end{itemize}
In Fig.\ \ref{F:map2} we illustrate how these classes are induced by
modular $k$-noncrossing diagrams.
\begin{figure}[h!t!b!p]
\centering
\includegraphics{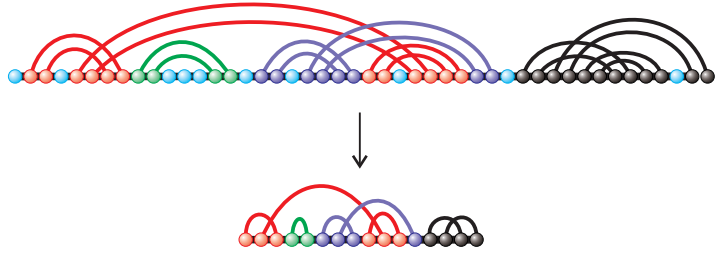}
\caption{\small Colored ${\sf V}_k$-shapes: a modular $3$-noncrossing diagram (top) 
and its colored ${\sf V}_3$-shape (bottom). In the resulting ${\sf
V}_3$-shape we color the four classes as follows:
$\mathbf{C}_1$(green), $\mathbf{C}_2$(black), $\mathbf{C}_3$(blue)
and $\mathbf{C}_4$(red).} \label{F:map2}
\end{figure}

Let us refine ${\sf V}_k$-shapes in two stages. For this
purpose let ${\mathcal I}_k(s,u_1,u_2)$\index{${\mathcal
I}_k(s,u_1,u_2)$} and $i_k(s,u_1,u_2)$\index{${\mathcal
I}_k(s,u_1,u_2)$} denote the set and cardinality of ${\sf V}_k$-shapes
having $s$ arcs, $u_1$ $1$-arcs and $u_2$ pairs of
mutually crossing $2$-arcs. Our first objective consists in
computing the generating function
\begin{equation*}
{\bf W}_k(x,y,w)=\sum_{s\geq 0}
\sum_{u_1=0}^{s}\sum_{u_2=0}^{\lfloor\frac{s-u_1}{2}\rfloor}
i_k(s,u_1,u_2)\, x^sy^{u_1}w^{u_2}.
\end{equation*}
That is, we first take the classes $\mathbf{C}_1$ and
$\mathbf{C}_2$ into account.

\begin{lemma}\label{L:recursion-0}
For $k> 2$, the coefficients $i_k(s,u_1,u_2)$ satisfy
\begin{eqnarray}\label{E:00}
i_k(s,u_1,u_2)& = & 0 \quad \text{for } u_1+2u_2>s\\
\label{E:wq}
\sum_{u_2=0}^{\lfloor\frac{s-u_1}{2}\rfloor}i_k(s,u_1,u_2) & = & i_k(s,u_1),
\end{eqnarray}
where $i_k(s,u_1)$ denotes the number of ${\sf V}_k$-shapes
having $s$ arcs, $u_1$ $1$-arcs. Furthermore we have the recursion:
\begin{eqnarray}\label{E:3rec}
(u_2+1)i_k(s+1,u_1,u_2+1)&=&(u_1+1)i_k(s,u_1+1,u_2)\nonumber\\&&
+(u_1+1)i_k(s-1,u_1+1,u_2).\label{E:2arcp}
\end{eqnarray}
\end{lemma}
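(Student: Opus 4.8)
The plan is to dispatch the two easy identities directly and to prove the recursion by an explicit bijection built from two ``local surgeries'' on shapes.

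For the vanishing statement, note that two $2$-arcs $(i,i+2)$ and $(j,j+2)$ cross if and only if $j=i+1$; hence a $2$-arc crosses \emph{at most one} other $2$-arc, the mutually crossing $2$-arc pairs form a partial matching on the set of $2$-arcs, and they involve $2u_2$ pairwise distinct arcs, none of which is a $1$-arc. Consequently any ${\sf V}_k$-shape counted by $i_k(s,u_1,u_2)$ has at least $u_1+2u_2$ arcs, so $i_k(s,u_1,u_2)=0$ whenever $u_1+2u_2>s$. The identity $\sum_{u_2}i_k(s,u_1,u_2)=i_k(s,u_1)$ is immediate, since summing over $u_2$ just forgets the number of crossing $2$-arc pairs.

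For the recursion, let $\mathcal{A}$ be the set of pairs $(\delta,P)$ where $\delta\in\mathcal{I}_k(s+1,u_1,u_2+1)$ and $P$ is one of its $u_2+1$ crossing pairs of $2$-arcs, so $|\mathcal{A}|=(u_2+1)i_k(s+1,u_1,u_2+1)$. Such a $P$ occupies four consecutive vertices $i,i+1,i+2,i+3$ carrying the arcs $(i,i+2),(i+1,i+3)$; call $P$ \emph{wrapped} if $\delta$ also contains the arc $(i-1,i+4)$ (the unique arc that could tightly enclose these four vertices) and \emph{exposed} otherwise, giving $\mathcal{A}=\mathcal{A}_{\mathrm{ex}}\sqcup\mathcal{A}_{\mathrm{wr}}$. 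I will construct a bijection between $\mathcal{A}_{\mathrm{ex}}$ and the set of pairs $(\delta',v)$ with $\delta'\in\mathcal{I}_k(s,u_1+1,u_2)$ and $v$ a distinguished $1$-arc of $\delta'$, and a bijection between $\mathcal{A}_{\mathrm{wr}}$ and the set of pairs $(\delta'',v)$ with $\delta''\in\mathcal{I}_k(s-1,u_1+1,u_2)$ and $v$ a distinguished $1$-arc. Since these target sets have cardinalities $(u_1+1)i_k(s,u_1+1,u_2)$ and $(u_1+1)i_k(s-1,u_1+1,u_2)$, the recursion follows. The first bijection, applied to $(\delta,P)\in\mathcal{A}_{\mathrm{ex}}$, replaces the four-vertex block carrying $P$ by a two-vertex block carrying a $1$-arc $v$ (net change $-1$ arc); its inverse replaces a $1$-arc by the four-vertex crossing-pair block. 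The second bijection, applied to $(\delta,P)\in\mathcal{A}_{\mathrm{wr}}$, replaces the six-vertex block formed by the enclosing arc $(i-1,i+4)$ together with $P$ by a two-vertex $1$-arc block (net change $-2$ arcs); its inverse replaces a $1$-arc $(j,j+1)$ by the block $(j,j+5),(j+1,j+3),(j+2,j+4)$. In each case one verifies, by elementary vertex-relabelling, that the operation and its inverse are mutually inverse and that the numbers of arcs, $1$-arcs and crossing $2$-arc pairs transform as claimed.

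The step I expect to be the main obstacle is checking that these surgeries stay inside the class of ${\sf V}_k$-shapes, i.e.\ that no stack of length $\ge 2$ is created. This is precisely what forces the exposed/wrapped dichotomy: naively contracting a crossing pair that is tightly enclosed by an arc $(i-1,i+4)$ would leave the resulting $1$-arc parallel to the image of that arc, hence a $2$-stack; absorbing the enclosing arc into the contraction (the reason a whole arc, not merely two vertices, disappears) repairs this. In the reverse direction, expanding a $1$-arc into a four-vertex crossing block could in principle produce a \emph{wrapped} pair, but that would force a $2$-stack already present in $\delta'$, a contradiction — so the image of the first bijection lands in $\mathcal{A}_{\mathrm{ex}}$, as required. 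The $k$-noncrossing condition is easy to maintain because each newly inserted configuration has the new arcs crossing only one another (at most a $2$-crossing, harmless since $k>2$), so no new $k$-crossing can arise. The remaining bookkeeping is routine.
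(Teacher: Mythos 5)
Your argument is correct: the vanishing and marginalization identities are handled exactly as one should, and your contraction/expansion surgery on a marked crossing pair of $2$-arcs --- with the exposed/wrapped dichotomy governed by the enclosing arc $(i-1,i+4)$, whose presence would otherwise create a $2$-stack --- accounts precisely for the two terms $(u_1+1)i_k(s,u_1+1,u_2)$ and $(u_1+1)i_k(s-1,u_1+1,u_2)$. This is essentially the same approach the paper relegates to its supplemental materials (described there as removing a specific arc of a labelled crossing configuration and carefully accounting for the resulting arc-configurations), so no further comparison is needed.
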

The proof of Lemma~\ref{L:recursion-0} is given in SM. We next proceed by
computing ${\bf W}_k(x,y,w)$.
\begin{proposition}\label{P:trivariate}
For $k>2$, we have
\begin{equation}\label{E:3gf}
{\bf W}_k(x,y,w)=(1+x)v \, {\bf F}_k\left(x(1+x)v^2\right),
\end{equation}
where $v=\left((1-w)x^3+(1-w)x^2+(2-y)x+1\right)^{-1}$.
\end{proposition}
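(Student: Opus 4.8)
The plan is to derive \eqref{E:3gf} by combining the known shape generating function \eqref{E:gfIk} with a combinatorial substitution that encodes the extra statistic $u_2$ (pairs of mutually crossing $2$-arcs). The starting point is Lemma~\ref{T:gfIk}, which expresses the bivariate shape generating function ${\bf I}_k(z,u)=\sum_{s,m}i_k(s,m)z^su^m$ in terms of ${\bf F}_k$. What must be shown is that passing from the $(s,u_1)$-refinement to the $(s,u_1,u_2)$-refinement amounts precisely to a change of variables from $u$ (the $1$-arc marker) to a pair of markers $y,w$, realized by the rational function $v$ above. Concretely, I would show that
\[
{\bf W}_k(x,y,w)=\frac{1+x}{1+2x-xy-(w-1)(x^2+x^3)}\;{\bf F}_k\!\left(\frac{x(1+x)}{\bigl(1+2x-xy-(w-1)(x^2+x^3)\bigr)^2}\right),
\]
and observe that the denominator equals $v^{-1}$, so the right-hand side is exactly $(1+x)v\,{\bf F}_k(x(1+x)v^2)$.

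The key step is establishing the recursion \eqref{E:2arcp} at the level of generating functions and solving it. Multiplying \eqref{E:2arcp} by $x^{s+1}y^{u_1}w^{u_2+1}$ and summing over all valid $s,u_1,u_2$ translates the three-term recurrence into a linear PDE (or, after integrating in $w$, a functional equation) relating ${\bf W}_k(x,y,w)$ to ${\bf W}_k(x,y,0)$. By \eqref{E:wq} the slice $w=1$ recovers $\sum_s\sum_{u_1}i_k(s,u_1)x^sy^{u_1}={\bf I}_k(x,y)$, whose closed form is given by Lemma~\ref{T:gfIk} with $z\mapsto x$, $u\mapsto y$. The ansatz suggested by the target formula is that ${\bf W}_k(x,y,w)$ depends on $(y,w)$ only through the single combination $\tilde u$ defined by $x(1+2x-x\tilde u)^{-1}$-type bookkeeping — equivalently, that replacing the $1$-arc weight generating block by one that also absorbs crossing-$2$-arc pairs has the effect $1+2x-xy \rightsquigarrow 1+2x-xy-(w-1)(x^2+x^3)$ in the denominator of \eqref{E:gfIk}. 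One then checks that this candidate satisfies both the boundary condition at $w=1$ and the recursion \eqref{E:2arcp}, which pins it down uniquely since the recursion together with the initial data $i_k(s,u_1,0)$ determines all coefficients.

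An alternative, more structural route avoids solving a PDE: interpret a crossing pair of $2$-arcs as a new ``super-$1$-arc'' type. A ${\sf V}_k$-shape counted by $i_k(s,u_1,u_2)$ can be built from a ${\sf V}_k$-matching by an inflation in the spirit of Lemma~\ref{T:gfIk}, where in addition to inserting stacks of length one over the shape-arcs one may create, at the sites of the $u_1$ $1$-arcs, the crossing-$2$-arc configurations of class $\mathbf C_2$; each such configuration contributes an extra factor bookkeeping the two arcs (weight $x^2$ for its arcs plus the vertex shifts, hence the $x^2+x^3$ correction) marked by $w$. This identifies the composite weight of a shape-arc region as exactly $v^{-1}$, and the $(1+x)v$ prefactor and $x(1+x)v^2$ argument then come out of the same calculation as in the proof of Lemma~\ref{T:gfIk}.

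The main obstacle I anticipate is the bookkeeping in the recursion-to-PDE step: getting the index ranges right (especially the $\lfloor (s-u_1)/2\rfloor$ cap and the $u_1+2u_2\le s$ vanishing from \eqref{E:00}), handling the shift $s\mapsto s+1$ versus $s\mapsto s-1$ on the right-hand side correctly, and verifying that the candidate rational substitution indeed solves the functional equation rather than merely matching the $w=1$ slice. Once the substitution $1+2x-xy\rightsquigarrow v^{-1}$ is justified, the remaining algebra — confirming $v^{-1}=(1-w)x^3+(1-w)x^2+(2-y)x+1$ and rewriting \eqref{E:gfIk} accordingly — is routine.
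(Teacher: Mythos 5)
Your primary route---deriving the transport equation $\partial_w{\bf W}_k=(x+x^2)\,\partial_y{\bf W}_k$ from the recursion \eqref{E:2arcp}, writing down the explicit rational candidate (your denominator $1+2x-xy-(w-1)(x^2+x^3)$ is exactly $v^{-1}$), and pinning it down via the $w=1$ boundary slice \eqref{E:wq} together with the support condition \eqref{E:00} by induction on $s$---is essentially identical to the paper's proof of Proposition~\ref{P:trivariate}. The proposal is correct in outline; the secondary ``super-arc'' sketch is not needed.
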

\begin{proof}
According to Lemma~\ref{T:gfIk}, we have
\begin{equation*}
{\bf I}_k(z,u)  =  \frac{1+z}{1+2z-zu} {\bf
F}_k\left(\frac{z(1+z)}{(1+2z-zu)^2}\right).
\end{equation*}
This generating function is connected to ${\bf W}_k(x,y,z)$ via
eq.~(\ref{E:wq}) of Lemma~\ref{L:recursion-0} as follows: setting
$w=1$, we have ${\bf W}_k(x,y,1)={\bf I}_k(x,y)$.
The recursion of eq.~(\ref{E:2arcp}) gives rise to the partial differential
equation
\begin{eqnarray}\label{E:wdiff}
\frac{\partial {\bf W}_k(x,y,w)}{\partial w}&=&x\frac{\partial {\bf
W}_k(x,y,w)}{\partial y}+x^2\frac{\partial {\bf
W}_k(x,y,w)}{\partial y}.
\end{eqnarray}
We next show
\begin{itemize}
\item the function
\begin{eqnarray}
{\bf W}_k^*(x,y,w)&=&\frac{(1+x)}{(1-w)x^3+(1-w)x^2+(2-y)x+1}
\times \nonumber\\
&&{\bf
F}_k\left(\frac{(1+x)x}{((1-w)x^3+(1-w)x^2+(2-y)x+1)^2}\right)\qquad
\end{eqnarray}
is a solution of eq.~(\ref{E:wdiff}),
\item its coefficients,
$i_k^*(s,u_1,u_2)=[x^sy^{u_1}w^{u_2}]{\bf
W}_k^*(x,y,w)$, satisfy $$i_k^*(s,u_1,u_2)=
0\quad\text{for}\quad u_1+2u_2>s,$$
\item ${\bf W}_k^*(x,y,1)={\bf I}_k(x,y)$.
\end{itemize}
Firstly,
\begin{eqnarray}\label{E:diffw}
\frac{\partial {\bf W}^*_k(x,y,w)}{\partial y} & = & u\,
{\bf F}_k\left(u\right)+ 2u \,{\bf F}_k'\left(u\right)\\
\frac{\partial {\bf W}^*_k(x,y,w)}{\partial w} & = & x(1+x)u \, {\bf
F}_k\left(u\right)+ 2x(1+x)u {\bf F}_k'\left(u\right),
\end{eqnarray}
where $$u= \frac{x(1+x)}{\left( (1-w)x^3+(1-w)x^2+(2-y)x+1\right)
^{2}}$$ and ${\bf F}_k'\left(u\right)=\sum_{n\geq
0}nf_k(2n)(u)^n$. Consequently, we derive
\begin{equation}
\frac{\partial {\bf W}_k^*(x,y,w)}{\partial w}=x\frac{\partial {\bf
W}_k^*(x,y,w)}{\partial y}+x^2\frac{\partial {\bf
W}_k^*(x,y,w)}{\partial y}.
\end{equation}
Secondly we prove $i_k^*(s,u_1,u_2)=0$ for $u_1+2u_2>s$.
To this end we observe that ${\bf W}^*_k(x,y,w)$ is a power series,
since it is analytic in $(0,0,0)$. It now suffices to note that the
indeterminants $y$ and $w$ only appear in form of products $xy$ and
$x^2w$ or $x^3w$.
Thirdly, the equality ${\bf W}_k^*(x,y,1)={\bf I}_k(x,y)$ is obvious.\\
{\it Claim.}
\begin{equation}\label{E:eequal2}
{\bf W}^*_k(x,y,w)={\bf W}_k(x,y,w).
\end{equation}
By construction the coefficients $i^*_k(s,u_1,u_2)$ satisfy
eq.~(\ref{E:3rec}) and we just proved $i_k^*(s,u_1,u_2)=0$ for $u_1+2u_2>s$.
In view of ${\bf W}_k^*(x,y,1)={\bf I}_k(x,y)$ we have
\begin{equation*}
\forall\, s,u_1;\qquad
 \sum_{u_2=0}^{\lfloor\frac{s-u_1}{2}\rfloor}i_k^*(s,u_1,u_2)=i_k(s,u_1).
\end{equation*}
Using these three properties it follows via induction over $s$
\begin{equation*}
\forall\, s,u_1,u_2\ge 0;\qquad i_k^*(s,u_1,u_2)=i_k(s,u_1,u_2),
\end{equation*}
whence the Claim and the proposition follows.
\end{proof}
In addition to $\mathbf{C}_1$ and $\mathbf{C}_2$, we consider next
the classes $\mathbf{C}_3$ and $\mathbf{C}_4$. For this purpose we
have to identify two new recursions, see Lemma~\ref{L:recursions-0}.
Setting $\vec{u}=(u_1,\dots,u_4)$ we denote by
$\mathcal{I}_k(s,\vec{u})$\index{$\mathcal{I}_k(s,\vec{u})$} and
$i_k(s,\vec{u})$\index{$i_k(s,\vec{u})$}, the set and number of
colored ${\sf V}_k$-shapes over $s$ arcs, containing $u_i$
elements of class $\mathbf{C}_i$, where $1\le i\le 4$. The key
result is
\begin{lemma}\label{L:recursions-0}
For $k>2$, the coefficients $i_k(s,\vec{u})$ satisfy
\begin{eqnarray}\label{E:erni2}
i_k(s,u_1,u_2,u_3,u_4) & = & 0  \quad\text{for }u_1+2u_2+2u_3+3u_4>s\\
\label{E:5ini}
\sum_{u_3,u_4\geq 0}i_k(s,u_1,u_2,u_3,u_4) & = & i_k(s,u_1,u_2).
\end{eqnarray}
Furthermore we have the recursions
\begin{align}
(u_3+1)&i_k(s+1,u_1,u_2,u_3+1,u_4)=\nonumber \\
&\quad \,  2u_1i_k(s-1,u_1,u_2,u_3,u_4)\nonumber\\
&+4(u_2+1)i_k(s-1,u_1,u_2+1,u_3,u_4)\nonumber\\
&+4(u_2+1)i_k(s-1,u_1,u_2+1,u_3-1,u_4)\nonumber\\
&+4(u_2+1)i_k(s-2,u_1,u_2+1,u_3-1,u_4)\nonumber\\
&+2(u_3+1)i_k(s,u_1,u_2,u_3+1,u_4)\nonumber\\
&+2u_3i_k(s-1,u_1,u_2,u_3,u_4)\nonumber\\
&+{6(u_3+1)i_k(s-1,u_1,u_2,u_3+1,u_4)}\nonumber\\
&+2(u_3+1)i_k(s-2,u_1,u_2,u_3+1,u_4)\nonumber\\
&+2u_3i_k(s-2,u_1,u_2,u_3,u_4)\nonumber\\
&+4(u_4+1)i_k(s,u_1,u_2,u_3-1,u_4+1)\nonumber\\
&+4(u_4+1)i_k(s-1,u_1,u_2,u_3-1,u_4+1)\nonumber\\
&+4u_4i_k(s-1,u_1,u_2,u_3,u_4)\nonumber\\
&+4(u_4+1)i_k(s-1,u_1,u_2,u_3,u_4+1)\nonumber\\
&+4u_4i_k(s-2,u_1,u_2,u_3,u_4)\nonumber\\
&+2(u_4+1)i_k(s-2,u_1,u_2,u_3,u_4+1)\nonumber\\
&+({2s}-2u_1-4u_2-4u_3-6u_4)i_k(s,u_1,u_2,u_3,u_4)\nonumber\\
&+2(2(s-1)-2u_1-4u_2-4u_3-6u_4)i_k(s-1,u_1,u_2,u_3,u_4)\nonumber\\
&+({2(s-2)}-4u_2-4u_3-6u_4)i_k(s-2,u_1,u_2,u_3,u_4)\label{u3recursion}
\end{align}
and
\begin{align}
2(u_4+1)i_k(s+1,u_1,u_2,u_3,u_4+1)&=(u_3+1)i_k(s,u_1,u_2,u_3+1,u_4)\nonumber\\
&\quad +2(u_2+1)_k(s,u_1,u_2+1,u_3,u_4).\quad\label{u4recursion}
\end{align}
\end{lemma}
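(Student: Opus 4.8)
The plan is to derive the two recursions combinatorially, by analyzing what happens to a colored ${\sf V}_k$-shape when we delete its longest arc, in the spirit of the argument sketched for Lemma~\ref{L:recursion-0} in the SM. First I would establish the support statements \eqref{E:erni2} and \eqref{E:5ini}: the inequality $u_1+2u_2+2u_3+3u_4\le s$ holds because an element of $\mathbf{C}_1$ occupies one arc, an element of $\mathbf{C}_2$ or $\mathbf{C}_3$ occupies two arcs, and an element of $\mathbf{C}_4$ occupies three arcs, and these configurations are arc-disjoint by definition (a $2$-arc that crosses exactly one arc cannot simultaneously be part of a mutually-crossing pair, and the ``middle'' arc $\beta$ of a $\mathbf{C}_3$ or $\mathbf{C}_4$ has length $\ge 3$ so is never a $1$-arc or a crossed $2$-arc). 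Equation \eqref{E:5ini} is just the observation that summing out $u_3,u_4$ forgets the $\mathbf{C}_3$- and $\mathbf{C}_4$-decorations and returns the count $i_k(s,u_1,u_2)$ tracked in Lemma~\ref{L:recursion-0}.

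The heart of the matter is the recursion \eqref{u3recursion} for $i_k(s+1,u_1,u_2,u_3+1,u_4)$. I would set up a marked-object bijection: on the left, distinguish one of the $u_3+1$ elements of class $\mathbf{C}_3$ in a shape on $s+1$ arcs; this distinguished pair is $(\alpha,\beta)$ with $\alpha$ the unique $2$-arc crossing $\beta$ and $\ell(\beta)\ge 3$. Removing the arc $\beta$ — and recording the length $\ell(\beta)$, the number of arcs ``released'' from underneath/crossing $\beta$, and whether $\alpha$ gets absorbed into some other colored configuration — produces a shape on $s$, $s-1$, or $s-2$ arcs. The case split is exactly according to what $\alpha$ becomes after $\beta$ is deleted: it may turn into a free $1$-arc region creating a new $\mathbf{C}_1$ (hence the $2u_1$ term), it may join another $2$-arc to form a $\mathbf{C}_2$ (the $4(u_2+1)$ terms, with the three sub-cases distinguished by whether $\ell(\beta)=3$ contributes an extra crossing, giving the shift in $s$), it may itself become the $\mathbf{C}_3$-arc of a different triple (the $6(u_3+1)$, $2u_3$ terms), it may merge with a $\mathbf{C}_3$-pair into a $\mathbf{C}_4$ (the $4(u_4+1)$ terms), or nothing structural happens to $\alpha$ (the ``$2s-2u_1-4u_2-4u_3-6u_4$''-type terms, which count the positions where $\beta$'s two endpoints sat among the $2s$ interstices not already committed to a colored block, refined by the $s,s-1,s-2$ shift coming from $\ell(\beta)\in\{3,4,\ge 5\}$ — more precisely length exactly $3$, exactly $4$, and $\ge 5$, which is where the three families of terms come from). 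Each coefficient on the right counts the number of ways the deleted data can be reconstructed; the factors $2,4,6$ are the number of distinguishable endpoint-placements or absorptions. I would verify that this map is a bijection onto the disjoint union of the marked classes on the right-hand side.

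The recursion \eqref{u4recursion} is the analogous but much shorter argument for $\mathbf{C}_4$: distinguishing one of the $u_4+1$ triples $(\alpha_1,\beta,\alpha_2)$ and deleting $\beta$ leaves a shape on $s$ arcs in which either $\alpha_1$ and $\alpha_2$ now form a $\mathbf{C}_3$-pair with some surviving arc (the $(u_3+1)i_k(s,u_1,u_2,u_3+1,u_4)$ term, with the coefficient $u_3+1$ counting which of the resulting $\mathbf{C}_3$-pairs was the one created) or they fall into an existing $\mathbf{C}_2$-type crossing pattern (the $2(u_2+1)$ term); the overall factor $2$ on the left records which of the two symmetric $2$-arcs $\alpha_1,\alpha_2$ we associate to the reconstruction. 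I expect the main obstacle to be purely bookkeeping: correctly enumerating the ``nothing happens to $\alpha$'' positions so that the arithmetic $2s$ vs.\ $2(s-1)$ vs.\ $2(s-2)$ with the precise correction constants $-2u_1-4u_2-4u_3-6u_4$ matches, and checking that no shape is counted in two different cases (i.e.\ that the $\mathbf{C}$-classes really are mutually exclusive even transiently during the deletion). A clean way to discharge the risk of an off-by-something error is to cross-check \eqref{u3recursion} and \eqref{u4recursion} against the specialization $w_3=w_4=1$, where they must collapse — via \eqref{E:5ini} — to the already-proved two-variable recursion \eqref{E:3rec}, and against small-$s$ data computed directly.
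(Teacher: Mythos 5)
Your overall strategy---mark one colored element, delete an arc from it, and account for the resulting configurations---is exactly the route the paper takes (its proof, deferred to the supplement, is described as ``removing a specific arc in a labelled $\mathbf{C}_3$-element and careful accounting of the resulting arc-configurations''), and your treatment of \eqref{E:erni2} and \eqref{E:5ini} is essentially right, provided you make explicit that a $2$-arc has a unique interior vertex and therefore crosses at most one other arc; that degree constraint, not the definitions alone, is what forces the colored configurations to be arc-disjoint.

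The gap is in the part that constitutes the actual content of the lemma: the eighteen terms of \eqref{u3recursion} and their coefficients are never derived, and the partial case-to-term assignments you do offer are inconsistent with the stated recursion, so carrying out your plan as written would not reproduce it. Concretely: (i) you propose to delete the \emph{long} arc $\beta$ of the marked $\mathbf{C}_3$-pair, but deleting $\beta=(i+1,j)$ removes the vertex $i+1$ and turns the crossing $2$-arc $\alpha=(i,i+2)$ into a $1$-arc, so $u_1$ increases in the image---yet every term on the right of \eqref{u3recursion} keeps $u_1$ fixed; the ``specific arc'' must be the $2$-arc $\alpha$, whose removal shortens $\beta$ by one and leaves $u_1$ alone. (ii) The same problem is fatal for your reading of \eqref{u4recursion}: deleting $\beta$ from $(\alpha_1,\beta,\alpha_2)$ creates two new $1$-arcs and leaves $\alpha_1,\alpha_2$ mutually non-crossing, whereas the right-hand side preserves $u_1$ and produces a marked $\mathbf{C}_3$- or $\mathbf{C}_2$-element; the identity is instead consistent with deleting one of the two $2$-arcs (whence the factor $2$ on the left), after which the survivor together with the shortened $\beta$ is the new $\mathbf{C}_3$- or $\mathbf{C}_2$-element. (iii) The stratification into $i_k(s,\cdot)$, $i_k(s-1,\cdot)$, $i_k(s-2,\cdot)$ does not come from $\ell(\beta)\in\{3,4,\ge 5\}$; it records how many pairs of surviving arcs become parallel after the deletion and must be merged to restore the defining property of a ${\sf V}_k$-shape that every stack has length one (each endpoint of the deleted arc can trigger at most one such collapse, hence $0$, $1$ or $2$). (iv) The quantity $2s-2u_1-4u_2-4u_3-6u_4$ is the number of arc-\emph{endpoints} not belonging to any colored configuration (note $2u_1+4u_2+4u_3+6u_4$ is twice the number of colored arcs), not a count of interstices. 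Your proposed sanity checks (recovering \eqref{E:3rec} at $w=t=1$, small-$s$ data) are sensible, but they would only detect that the bookkeeping is wrong; the accounting itself---the one genuinely hard step---still has to be done.
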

The proof of Lemma~\ref{L:recursions-0} is presented the SM and follows by
removing a specific arc in a labelled ${\bf C}_3$-element and careful accounting
of the resulting arc-configurations.

Proposition~\ref{P:trivariate} and Lemma~\ref{L:recursions-0} put us
in position to compute the generating function of colored ${\sf
V}_k$-shapes
\begin{equation}
{\bf I}_k(x,y,z,w,t)=
\sum_{s,u_1,u_2,u_3,u_4}i_k(s,\vec{u})\, x^s y^{u_1} z^{u_2}w^{u_3} t^{u_4}.
\end{equation}
\begin{proposition}\label{P:5tri}
For $k>2$, the generating function of colored ${\sf V}_k$-shapes is given by
\begin{equation}
{\bf I}_k(x,y,z,w,t)=\frac{1+x}{\theta}{\bf F}_k
\left(\frac{x(1+(2w-1)x+(t-1)x^2)}{\theta^2}\right),
\end{equation}
where $\theta=1-(y-2)x+(2w-z-1)x^2+(2w-z-1)x^3$.
\end{proposition}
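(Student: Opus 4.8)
The plan is to imitate the proof of Proposition~\ref{P:trivariate}. I would translate the two recursions of Lemma~\ref{L:recursions-0} into a system of two linear partial differential equations for ${\bf I}_k(x,y,z,w,t)$, exhibit the asserted closed form as a candidate solution ${\bf I}_k^*$, verify that ${\bf I}_k^*$ solves the system, that its coefficients vanish for $u_1+2u_2+2u_3+3u_4>s$, and that on the slice $w=t=1$ it restricts to the function ${\bf W}_k$ of Proposition~\ref{P:trivariate}; an induction on $s$ then forces ${\bf I}_k^*={\bf I}_k$.

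First I would derive the two PDEs. Multiplying eq.~(\ref{u4recursion}) by $x^{s+1}y^{u_1}z^{u_2}w^{u_3}t^{u_4}$ and summing over all indices (the support condition~(\ref{E:erni2}) annihilates the finitely many boundary terms not governed by the recursion) yields the clean relation
\begin{equation*}
2\,\frac{\partial {\bf I}_k}{\partial t}=x\,\frac{\partial {\bf I}_k}{\partial w}+2x\,\frac{\partial {\bf I}_k}{\partial z}.
\end{equation*}
The same summation applied to the eighteen-term recursion~(\ref{u3recursion}) produces a first-order equation
\begin{equation*}
\frac{\partial {\bf I}_k}{\partial w}=\mathcal{D}\,{\bf I}_k,
\end{equation*}
where a shift $s\mapsto s-i$ inside a summand contributes a factor $x^i$, a factor $u_j+c$ becomes the appropriate combination of a shift and a derivative in the variable marking $u_j$, and a coefficient linear in $s$ becomes $2x\partial_x$ up to lower-order operators; so $\mathcal{D}$ is an explicit first-order operator in $x\partial_x,\partial_y,\partial_z,\partial_w,\partial_t$ with polynomial-in-$x$ coefficients. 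Spelling $\mathcal{D}$ out is the single genuinely tedious bookkeeping step.

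Next I would set $\theta=1-(y-2)x+(2w-z-1)x^2+(2w-z-1)x^3$, put $U=x\bigl(1+(2w-1)x+(t-1)x^2\bigr)\theta^{-2}$, and take ${\bf I}_k^*=\tfrac{1+x}{\theta}{\bf F}_k(U)$, the asserted right-hand side. Because ${\bf F}_k$ is transcendental, the two PDEs are checked by matching the coefficients of ${\bf F}_k(U)$ and of ${\bf F}_k'(U)$ separately. Using $\partial_t\theta=0$, $\partial_w\theta=2x^2(1+x)=-2\,\partial_z\theta$, $\partial_t U=x^3\theta^{-2}$, $\partial_w U=2x^2\theta^{-2}-4x^2(1+x)U\theta^{-1}$ and $\partial_z U=2x^2(1+x)U\theta^{-1}$, one finds that in the $t$-equation the ${\bf F}_k(U)$-terms on the right cancel and the ${\bf F}_k'(U)$-terms collapse to $2x^3(1+x)\theta^{-3}{\bf F}_k'(U)=2\,\partial_t{\bf I}_k^*$, so that PDE holds. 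The equation $\partial_w{\bf I}_k^*=\mathcal{D}{\bf I}_k^*$ requires the analogous but longer reduction, which the closed form is designed to satisfy. The vanishing $i_k^*(s,\vec u)=0$ for $u_1+2u_2+2u_3+3u_4>s$ is immediate: ${\bf I}_k^*$ is a power series (analytic at the origin) in which $y$ occurs only in the monomial $xy$, $z$ only in $x^2z$ and $x^3z$, $w$ only in $x^2w,x^3w$ (inside $\theta$) and in $x^2w$ (inside $U$), and $t$ only in $x^3t$, and composing with the power series ${\bf F}_k$ preserves this. Finally, substituting $w=t=1$ turns $\theta$ into $v^{-1}$ and $U$ into $x(1+x)v^2$ with $v$ as in Proposition~\ref{P:trivariate}, so ${\bf I}_k^*(x,y,z,1,1)={\bf W}_k(x,y,z)$, and by eq.~(\ref{E:5ini}) this means $\sum_{u_3,u_4}i_k^*(s,u_1,u_2,u_3,u_4)=i_k(s,u_1,u_2)$.

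To conclude, I would argue by induction on $s$ that the recursions~(\ref{u3recursion})--(\ref{u4recursion}), the support condition~(\ref{E:erni2}) and the known marginals $i_k(s,u_1,u_2)$ determine the $i_k(s,\vec u)$ uniquely; since ${\bf I}_k^*$ reproduces all of these, $i_k^*(s,\vec u)=i_k(s,\vec u)$ for every $s,\vec u$, whence ${\bf I}_k^*={\bf I}_k$. The hardest part will be the faithful conversion of the eighteen summands of~(\ref{u3recursion}) into $\mathcal{D}$ and the ensuing verification $\partial_w{\bf I}_k^*=\mathcal{D}{\bf I}_k^*$; a secondary point needing care is confirming that the two PDEs together with the marginal and support data really pin the coefficients down in the induction, so that no spurious degree of freedom remains.
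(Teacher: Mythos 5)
Your proposal is correct and follows essentially the same route as the paper: translate the two recursions of Lemma~\ref{L:recursions-0} into the pair of PDEs, verify that the asserted closed form satisfies them, check the support condition $i_k^*(s,\vec u)=0$ for $u_1+2u_2+2u_3+3u_4>s$ via the monomial structure, match the $w=t=1$ slice with ${\bf W}_k$ from Proposition~\ref{P:trivariate}, and conclude by induction on $s$. Your explicit second PDE $2\partial_t{\bf I}_k=x\,\partial_w{\bf I}_k+2x\,\partial_z{\bf I}_k$ and the verification of it agree with the paper's eq.~(\ref{E:KB}).
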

\begin{proof}
The first recursion of Lemma~\ref{L:recursions-0} implies the partial
differential equation
\begin{align}
\frac{\partial {\bf I}_k}{\partial w}&=
\frac{\partial {\bf I}_k}{\partial x}(2x^2+4x^3+2x^4)-
\frac{\partial {\bf I}_k}{\partial y}(2xy+2x^2 y)\nonumber\\
&+\frac{\partial {\bf I}_k}{\partial z}
(-4xz+4x^2w+4x^2-4x^3z-8x^2z+4x^3w)\nonumber\\
&+\frac{\partial {\bf I}_k}{\partial w}(-4xw+2x-6x^2w+6x^2-2x^3w+2x^3)
\nonumber\\
&+\frac{\partial {\bf I}_k}{\partial t}(-6xt+4xw-8x^2t+4x^2w+4x^2-2x^3t+2x^3).
\label{E:KA}
\end{align}
Analogously, the second recursion of Lemma~\ref{L:recursions-0} gives rise to the
partial differential equation
\begin{align}
 2\frac{\partial {\bf I}_k}{\partial t}
&=\frac{\partial {\bf I}_k}{\partial w}x+
\frac{\partial {\bf I}_k}{\partial z}\, 2x.\label{E:KB}
\end{align}
Aside from being a solution of eq.~(\ref{E:KA}) and
eq.~(\ref{E:KB}), we take note of the fact that eq.~(\ref{E:5ini})
of Lemma~\ref{L:recursions-0} is equivalent to
\begin{equation}
{\bf I}_k(x,y,z,1,1)={\bf W}_k(x,y,z).\label{bound}
\end{equation}
We next show
\begin{itemize}
\item The function
\begin{align*}
{\bf I}_k^*(x,y,z,w,t)&=\frac{1+x}{1-(y-2)x+(2w-z-1)x^2+(2w-z-1)x^3}
\times \\
&{\bf F}_k
\left(\frac{x(1+(2w-1)x+(t-1)x^2)}
{(1-(y-2)x+(2w-z-1)x^2+(2w-z-1)x^3)^2}\right)
\end{align*}
is a solution of eq.~(\ref{E:KA}) and eq.~(\ref{E:KB}),
\item its coefficients,
$i_k^*(s,u_1,u_2,u_3,u_4)=[x^sy^{u_1}z^{u_2}w^{u_3}t^{u_4}]{\bf
I}_k^*(x,y,z,w,t)$, satisfy
$$i_k^*(s,u_1,u_2,u_3,u_4)=0\quad\text{for}\quad u_1+2u_2+2u_3+3u_4>s,$$
\item ${\bf I}_k^*(x,y,z,1,1)={\bf W}_k(x,y,z)$.
\end{itemize}
We verify by direct computation that ${\bf I}_k^*(x,y,z,w,t)$
satisfies eq.~(\ref{E:KA}) as well as eq.~(\ref{E:KB}).
Next we prove $i_k^*(s,u_1,u_2,u_3,u_4)=0$ for $u_1+2u_2+2u_3+3u_4>s$.
Since ${\bf I}_k^*(x,y,z,w,t)$ is analytic in $(0,0,0,0,0)$, it is
a power series. As the indeterminants $y$, $z$, $w$ and $t$
appear only in form of products $xy$, $x^2z$ or $x^3z$, $x^2w$ or
$x^3w$, and $x^3t$, respectively, the assertion follows.\\
{\it Claim.}
\begin{equation*}
{\bf I}_k^*(x,y,z,w,t)={\bf I}_k(x,y,z,w,t).
\end{equation*}
By construction,
$i_k^*(s,\vec{u})$ satisfies the recursions eq.~(\ref{u3recursion})
and eq.~(\ref{u4recursion}) as well as
$i_k^*(s,u_1,u_2,u_3,u_4)=0$ for $u_1+2u_2+2u_3+3u_4>s$.
Eq.~(\ref{bound}) implies
\begin{equation*}
\sum_{u_3,u_4\geq 0}i_k^*(s,u_1,u_2,u_3,u_4)=i_k(s,u_1,u_2).
\end{equation*}
Using these propertites we can show via induction over $s$
\begin{equation*}
\forall\, s,u_1,u_2,u_3,u_4\geq 0;\qquad
i_k^*(s,u_1,u_2,u_3,u_4)=i_k(s,u_1,u_2,u_3,u_4)
\end{equation*}
and the proposition is proved.
\end{proof}


\section{The main theorem}\label{S:main}

We are now in position to compute ${\bf Q}_{k}(z)$. All technicalities aside, 
we already introduced the main the strategy in the proof of 
Proposition~\ref{P:k=2}:
as in the case $k=2$ we shall take care of all ``critical'' arcs by specific
inflations.

\begin{theorem}\label{T:queen}
Suppose $k>2$, then
\begin{eqnarray}\label{E:gut0}
{\bf Q}_{k}(z) &=&
\frac{1-z^2+z^4}{q(z)}
{\bf F}_k\left(\vartheta(z)\right),
\end{eqnarray}
where 
\begin{eqnarray}
q(z)& = & 1-z-z^2+z^3+2z^4+z^6-z^8+z^{10}-z^{12}
\nonumber \\
\label{sing_eq}
\vartheta(z) & = & 
\frac{z^4(1-z^2-z^4+2z^6-z^8)}{q(z)^2}.
\end{eqnarray}
Furthermore, for $3\leq k\leq 9$, ${\sf Q}_k(n)$ satisfies
\begin{equation}\label{E:gut1}
{\sf Q}_k(n)\sim  c_{k}\, n^{-((k-1)^2+(k-1)/2)}\,
\gamma_{k}^{-n} ,\quad \text{for some
$c_{k}>0$,}\qquad
\end{equation}
where $\gamma_{k}$ is the minimal, positive real solution
of $\vartheta(z)=\rho_k^2$, see Table~\ref{T:tab20}.
\end{theorem}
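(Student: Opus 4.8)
The plan is to run the argument of Proposition~\ref{P:k=2}, with plain ${\sf V}_k$-shapes replaced by the colored ${\sf V}_k$-shapes of Section~\ref{S:color}. First I would introduce the surjection $\varphi$ from the set $\mathcal{Q}_k$ of modular $k$-noncrossing diagrams onto the set of ${\sf V}_k$-shapes: collapse every stem of $\delta\in\mathcal{Q}_k$ to a single arc and delete the isolated vertices; the arc-configurations of classes $\mathbf{C}_1,\dots,\mathbf{C}_4$ are then intrinsic to the resulting shape. The fibers of $\varphi$ partition $\mathcal{Q}_k$, so that
\begin{equation*}
\mathbf{Q}_k(z)=\sum_{\gamma}\mathbf{Q}_\gamma(z),\qquad
\mathbf{Q}_\gamma(z)=\sum_{\delta\in\varphi^{-1}(\gamma)}z^{|\delta|}.
\end{equation*}

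The heart of the matter is the computation of $\mathbf{Q}_\gamma(z)$ by inflating the colored shape $\gamma$. As for $k=2$ I would use the building blocks $\mathcal{L}=\textsc{Seq}(\mathcal{Z})$ (blocks of isolated vertices), stacks $\mathcal{K}$, induced stacks $\mathcal{N}$ and stems $\mathcal{M}=\mathcal{K}\times\textsc{Seq}(\mathcal{N})$, with the generating functions of Proposition~\ref{P:k=2}. A shape-arc belonging to \emph{no} colored class is inflated exactly as in the case $k=2$, to a stem together with its two mandatory flanking blocks of isolated vertices, contributing one factor $\eta(z)=\mathbf{M}(z)(1-z)^{-2}$ (the $\eta$ of Proposition~\ref{P:k=2}). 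The genuinely new ingredient is that the short, mutually crossing arcs making up a $\mathbf{C}_i$-element occupy consecutive shape-vertices and hence cannot be inflated independently: canonicity together with the minimum-arc-length condition forces the stems over them to interleave in a restricted fashion and to carry prescribed numbers of isolated vertices. For each of the four colors I would carry out the corresponding specific inflation — for $\mathbf{C}_2$ distinguishing the admissible mutual positions (crossing versus nested) of the two stems over the crossing $2$-arcs and accounting for the forced unpaired vertices, for $\mathbf{C}_3$ describing how the stem over the long arc $\beta$ accommodates the stem over $\alpha$, and for $\mathbf{C}_4$ the joint configuration of three stems — each producing a series $\tau_i(z)$ per $\mathbf{C}_i$-element. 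Since the $\mathbf{C}_i$-configurations are pairwise arc-disjoint (each $2$-arc crosses at most one other arc) and each inflation depends only on the combinatorial type,
\begin{equation*}
\mathbf{Q}_\gamma(z)=\frac{1}{1-z}\,\eta(z)^{\,s-\sum_i a_i u_i}\prod_{i=1}^{4}\tau_i(z)^{u_i}
=\frac{1}{1-z}\,\eta(z)^{\,s}\prod_{i=1}^{4}b_i(z)^{u_i},
\end{equation*}
where $(s,\vec u)$ is the combinatorial type of $\gamma$, $a_i\in\{1,2,2,3\}$ is the number of arcs in a $\mathbf{C}_i$-configuration, and $b_i(z)=\tau_i(z)\,\eta(z)^{-a_i}$. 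Summing over colored shapes and invoking the generating function of colored ${\sf V}_k$-shapes of Proposition~\ref{P:5tri},
\begin{equation*}
\mathbf{Q}_k(z)=\sum_{s,\vec u}i_k(s,\vec u)\,\mathbf{Q}_\gamma(z)
=\frac{1}{1-z}\,\mathbf{I}_k\!\bigl(\eta(z),\,b_1(z),\,b_2(z),\,b_3(z),\,b_4(z)\bigr).
\end{equation*}
Substituting the closed form of $\mathbf{I}_k$ from Proposition~\ref{P:5tri} and simplifying the resulting rational functions should collapse the prefactor $\tfrac{1}{1-z}\cdot\tfrac{1+x}{\theta}$ to $\tfrac{1-z^2+z^4}{q(z)}$ and the argument $\tfrac{x(1+(2w-1)x+(t-1)x^2)}{\theta^2}$ of $\mathbf{F}_k$ to $\vartheta(z)$; this is eq.~(\ref{E:gut0}) with $q(z)$ and $\vartheta(z)$ as in eq.~(\ref{sing_eq}).

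For the asymptotic statement eq.~(\ref{E:gut1}): $\vartheta(z)$ is algebraic and analytic in a neighbourhood of $z=0$ with $\vartheta(0)=0$, while $\mathbf{F}_k(z)$ is $D$-finite by Lemma~\ref{L:k-D}; hence $\mathbf{Q}_k(z)$ is $D$-finite, Pringsheim's theorem supplies a dominant positive real singularity $\gamma_k$, and $D$-finiteness provides the analytic continuation making $\mathbf{Q}_k$ $\Delta_{\gamma_k}$-analytic. I would then check that $\gamma_k$ equals the minimal positive real solution of $\vartheta(z)=\rho_k^2$ and is the \emph{unique} dominant singularity: on $[0,\gamma_k)$ the function $\vartheta$ is real analytic and stays strictly below $\rho_k^2$, so $\mathbf{Q}_k$ is analytic there; at $\gamma_k$ either the singularity $\rho_k^2$ of $\mathbf{F}_k$ is attained or $\gamma_k$ is a zero of $q$, and comparing $|\gamma_k|$ with the remaining zeros of $q(z)$ and with the other preimages of $R_k$ under $\vartheta$ (as carried out in the SM for $k=2$) rules out the second alternative, as well as any competing singularity on $|z|=\gamma_k$; in particular $\tfrac{1-z^2+z^4}{q(z)}$ is analytic at $\gamma_k$. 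A direct computation gives $\vartheta'(\gamma_k)\neq 0$. Proposition~\ref{P:algeasym}, applied with inner function $\psi=\vartheta$ and outer function $\mathbf{F}_k$ — together with the observation that multiplication by the analytic rational prefactor changes only the constant, not the subexponential order — then yields
\begin{equation*}
{\sf Q}_k(n)\sim c_k\,n^{-((k-1)^2+(k-1)/2)}\,\gamma_k^{-n},\qquad c_k>0,
\end{equation*}
the subexponential exponent being inherited from the singular expansion of $\mathbf{F}_k$ in Proposition~\ref{P:fk}. I expect the inflation step — pinning down the four series $\tau_i(z)$ and verifying that the substitution into $\mathbf{I}_k$ genuinely telescopes to the stated $q(z)$ and $\vartheta(z)$ — to be the main obstacle; the uniqueness of the dominant singularity is a secondary, more routine point.
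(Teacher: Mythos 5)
Your architecture is exactly the paper's: partition $\mathcal{Q}_k$ by the fibers of the shape projection, inflate each colored shape with the building blocks $\mathcal{L},\mathcal{K},\mathcal{N},\mathcal{M}$ of Proposition~\ref{P:k=2}, feed the resulting substitution into the closed form of ${\bf I}_k$ from Proposition~\ref{P:5tri}, and finish with $D$-finiteness, Pringsheim, uniqueness of the dominant singularity and Proposition~\ref{P:algeasym}. Your identification of $\eta(z)=\mathbf{M}(z)(1-z)^{-2}$ as the per-arc factor matches the paper's $\varsigma_0$, and the renormalization $b_i=\tau_i\eta^{-a_i}$ is algebraically harmless even though the paper books the arcs differently (it inflates only $\beta$ specially in the $\mathbf{C}_3$- and $\mathbf{C}_4$-cases and inflates the $1$-arc of a $\mathbf{C}_1$-element generically, so its residual arc count is $s-2u_2-u_3-u_4$, not $s-\sum_i a_iu_i$). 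The asymptotic half of your argument is essentially verbatim the paper's.

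The genuine gap is that the four series $\tau_i(z)$ --- the paper's ${\bf C}_1(z),\dots,{\bf C}_4(z)$ --- are the entire substance of the theorem, and you neither compute them nor identify the correct case distinction that produces them. The operative dichotomy is not, as you suggest for $\mathbf{C}_2$, ``crossing versus nested mutual positions of the two stems'' (the shape already fixes that the inflated stems cross); it is whether each shape-arc is inflated to a minimal $2$-stack or to a $\dagger$-stem, i.e.\ an element of $\mathcal{M}-\mathcal{R}^2$. A $2$-stack has the shortest possible inner arc, so it forces specific subsets of the intervals $[i,i+1]$, $[i+1,i+2]$, $[i+2,i+3]$ to receive isolated vertices in order to reach arc length four, and one must run an inclusion--exclusion over the forbidden insertion patterns: e.g.\ for $\mathbf{C}_2$ with both arcs inflated to $2$-stacks the class is $\mathcal{R}^4\times\bigl[(\textsc{Seq}(\mathcal{Z}))^3-\mathcal{E}-2(\mathcal{Z}\times\textsc{Seq}(\mathcal{Z}))\bigr]$, while a $\dagger$-stem relaxes the constraint on its side. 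The analogous splits for $\mathbf{C}_3$ and $\mathbf{C}_4$ (where only the long arc $\beta$ is case-split) yield the paper's ${\bf C}_3={\bf C}_3^{(\mathrm{a})}+{\bf C}_3^{(\mathrm{b})}$ and ${\bf C}_4={\bf C}_4^{(\mathrm{a})}+{\bf C}_4^{(\mathrm{b})}$. Without these explicit classes you cannot obtain the substitution values $\varsigma_1,\dots,\varsigma_4$, and hence cannot verify that the composition with ${\bf I}_k$ collapses to the stated $q(z)$ and $\vartheta(z)$ --- which you yourself flag as the unresolved ``main obstacle.''
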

\begin{table}[!h]
\tabcolsep 0pt
\begin{center}
\def\temptablewidth{1\textwidth}
{\rule{\temptablewidth}{1pt}}
\begin{tabular*}{\temptablewidth}{@{\extracolsep{\fill}}llllllllll}
$k$   & $ 3 $ & $ 4 $ & $ 5 $ & $ 6 $ &  $7$ & $8$ & $9$\\
\hline
$\theta(n)$ &$n^{-5}$ & $n^{-\frac{21}{2}}$ &
$n^{-18}$ & $n^{-\frac{55}{2}}$ &
$n^{-39}$ & $n^{-\frac{105}{2}}$ & $n^{-68}$
\vspace{3pt}\\
\hline $\gamma_{k}^{-1}$& $2.5410$ & $3.0132$ & $3.3974$ &
$3.7319$ & $4.0327$ & $4.3087$ & $4.5654$\\
\hline
\end{tabular*}
\end{center}
\caption{\small Exponential growth rates $\gamma_{k}^{-1}$ and
subexponential factors $\theta(n)$, for modular, $k$-noncrossing diagrams
with minimum arc length four.} \vspace*{-12pt}
\label{T:tab20}
\end{table}
\begin{proof}
Let $\mathcal{Q}_k$ denote the set of modular, $k$-noncrossing diagrams 
and let $\mathcal{I}_k$ and $\mathcal{I}_k(s,\vec{u})$ denote
the set of all ${\sf V}_k$-shapes and those having $s$ arcs and 
$u_i$ elements belonging to class $\mathbf{C}_i$, where $1\le i\le 4$. 
Then we have the surjective map,
\begin{equation*}
\varphi_k:\ \mathcal{Q}_k\rightarrow \mathcal{I}_k,
\end{equation*}
inducing the partition
$\mathcal{Q}_k=\dot{\cup}_\gamma\varphi_k^{-1}(\gamma)$, where
$\varphi_k^{-1}(\gamma)$ is the preimage set of shape $\gamma$ under
the map $\varphi_k$. This partition allows us to organize ${\bf
Q}_k(z)$ with respect to colored ${\sf V}_k$-shapes,
$\gamma$, as follows:
\begin{equation}\label{E:tk2}
{\bf Q}_k(z)= \sum_{s,\vec{u}}
\sum_{\gamma\in\mathcal{I}_k(s,\vec{u})} {\bf Q}_\gamma(z).
\end{equation}
We proceed by computing the generating function ${\bf Q}_\gamma(z)$
following the strategy of Proposition~\ref{P:k=2}, also using the
notation therein. The key point is that the inflation-procedures are
specific to the $\mathbf{C}_i$-classes. In the following we will
inflate all ``critical'' arcs, i.e.~arcs that require the insertion
of additional isolated vertices in order to satisfy the minimum
arc length condition. In the following we refer to a stem different
from a $2$-stack as a $\dagger$-stem\index{$\dagger$-stem}.
Accordingly, the combinatorial class of $\dagger$-stems is given by
$(\mathcal{M}-\mathcal{R}^2)$.

\begin{itemize}
\item {\bf $\mathbf{C}_1$-class:} here we insert isolated vertices, see
Fig. \ref{F:step1},
\begin{figure}[h!t!b!p]
\centering
\includegraphics[width=0.5\textwidth]{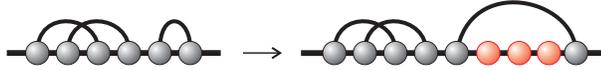}
\caption{\small $\mathbf{C}_1$-class: insertion of at least three vertices (red)
\label{F:step1}}
\end{figure}
and obtain immediately
\begin{equation}\label{E:K1}
{\bf C}_1(z)=\frac{z^3}{1-z}.
\end{equation}

\item {\bf $\mathbf{C}_2$-class:} any such element is a pair
$((i,i+2),(i+1,i+3))$ and we shall distinguish the following scenarios:
\begin{itemize}
\item both arcs are inflated to stacks of length two, see Fig. \ref{F:step2a}.
      Ruling out the cases where no isolated vertex is inserted and the
      two scenarios, where there is no insertion into the
      interval\index{interval}
      $[i+1,i+2]$ and only in either $[i,i+1]$ or $[i+2,i+3]$, see
      Fig.\ \ref{F:step2a}, we arrive at
\begin{equation*}
      \mathcal{C}_2^{(\text{a})}
      = \mathcal{R}^4 \times [(\textsc{Seq}(\mathcal{Z}))^3-\mathcal{E}
      -2(\mathcal{Z}\times\textsc{Seq}(\mathcal{Z}))].
\end{equation*}
      This combinatorial class has the generating function
\begin{equation*}
{\bf C}_{2}^{(\text{a})}(z)=
z^8\left(\left(\frac{1}{1-z}\right)^3-1-\frac{2z}{1-z}\right).
\end{equation*}
\begin{figure}[h!t!b!p]
\centering
\includegraphics[width=0.5\textwidth]{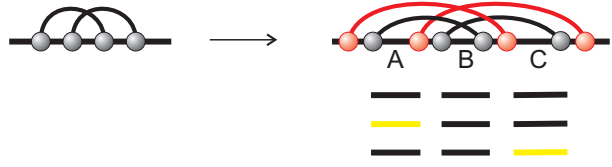}
\caption{\small {\bf $\mathbf{C}_2$-class:} inflation of both arcs to 2-stacks.
Inflated arcs are colored red while the original arcs of the
shape are colored black. We set $A=[i+1,i+2]$, $B=[i+2,i+3]$ and $C=[i+2,i+3]$
and illustrate the ``bad'' insertion scenarios as follows:
an insertion of some isolated vertices is represented by a yellow segment
and no insertion by a black segment. See the text for details.}
\label{F:step2a}
\end{figure}
\item one arc, $(i+1,i+3)$ or $(i,i+2)$ is inflated to a $2$-stack,
      while its counterpart is inflated to an arbitrary  $\dagger$-stem,
      see Fig. \ref{F:step2b}.
      Ruling out the cases where no vertex is inserted
      in $[i+1,i+2]$ and $[i+2,i+3]$ or $[i,i+1]$ and $[i+2,i+3]$, we obtain
\begin{equation*}
      \mathcal{C}_{2}^{(\text{b})}=2
      \mathcal{R}^2\times(\mathcal{M}-\mathcal{R}^2)
      \times((\textsc{Seq}(\mathcal{Z}))^2-\mathcal{E})
      \times\textsc{Seq}(\mathcal{Z}),
\end{equation*}
      having the generating function
    \begin{equation*}
      {\bf C}_{2}^{(\text{b})}(z)=2
      z^4\left(\frac{\frac{z^4}{1-z^2}}{1-\frac{z^4}{1-z^2}
      \left(\frac{2z}{1-z}+\left(\frac{z}{1-z}\right)^2\right)}-z^4\right)
      \left(\left(\frac{1}{1-z}\right)^2-1\right)
      {\frac{1}{1-z}}.
\end{equation*}
\begin{figure}[h!t!b!p]
\centering
\includegraphics[width=0.6\textwidth]{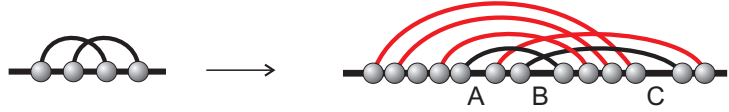}
\caption{\small 
{\bf $\mathbf{C}_2$-class:} inflation of only one arc to a $2$-stack.
Arc-coloring and labels as in Fig.\ \ref{F:step2a}}
\label{F:step2b}
\end{figure}
\item both arcs are inflated to an arbitrary $\dagger$-stem,
      respectively, see Fig.\ \ref{F:step2c}. In this case
      the insertion of isolated
      vertices is arbitrary, whence
\begin{equation*}
      \mathcal{C}_{2}^{(\text{c})}=
      (\mathcal{M}-\mathcal{R}^2)^2\times(\textsc{Seq}(\mathcal{Z}))^3,
\end{equation*}
      with generating function
    \begin{equation*}
      {\bf C}_{2}^{(\text{c})}(z)=
      \left(\frac{\frac{z^4}{1-z^2}}{1-\frac{z^4}{1-z^2}
      \left(\frac{2z}{1-z}+\left(\frac{z}{1-z}\right)^2\right)}-z^4\right)^2
      \left(\frac{1}{1-z}\right)^3.
\end{equation*}
\end{itemize}
\begin{figure}[h!t!b!p]
\centering
\includegraphics[width=0.6\textwidth]{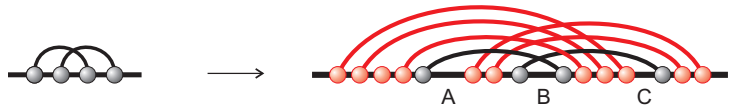}
\caption{\small
{\bf $\mathbf{C}_2$-class:} inflation of both arcs to an arbitrary
$\dagger$-stem. Arc-coloring and labels as in Fig.\ \ref{F:step2a}}
\label{F:step2c}
\end{figure}
As the above scenarios are mutually exclusive, the generating function of
the $\mathcal{C}_2$-class is given by
\begin{equation}\label{E:K2}
{\bf C}_2(z)= {\bf C}_2^{(\text{a})}(z)+{\bf
C}_2^{(\text{b})}(z)+{\bf C}_2^{(\text{c})}(z).
\end{equation}
Furthermore note that {\it both} arcs of the $\mathcal{C}_2$-class are inflated
in the cases (a), (b) and (c).

\item {\bf $\mathbf{C}_3$-class:} this class consists of arc-pairs $(\alpha,\beta)$
      where $\alpha$ is the unique $2$-arc crossing $\beta$ and $\beta$ has
      length at least three. Without loss of generality we can restrict our
      analysis to the case {$((i,i+2),(i+1,j))$}, $(j>i+3)$.
\begin{itemize}
\item the arc $(i+1,j)$ is inflated to a $2$-stack. Then
      we have to insert at least one isolated vertex in either
      $[i,i+1]$ or $[i+1,i+2]$, see Fig. \ref{F:step3}. Therefore we have
\begin{equation*}
    \mathcal{C}_{3}^{(\text{a})}=
    \mathcal{R}^2 \times (\textsc{Seq}(\mathcal{Z})^2-\mathcal{E}),
\end{equation*}
    with generating function
    \begin{equation*}
    {\bf C}_{3}^{(\text{a})}(z)= z^4\left(\left(\frac{1}{1-z}\right)^2-1\right).
    \end{equation*}
     Note that the arc $(i,i+2)$ is not considered here, it can be inflated
     without any restrictions.
\item the arc $(i+1,j)$ is inflated to an arbitrary  $\dagger$-stem,
      see Fig.\ \ref{F:step3}). Then
\begin{equation*}
    \mathcal{C}_{3}^{(\text{b})}=
    (\mathcal{M}-\mathcal{R}^2)\times
    {\textsc{Seq}(\mathcal{Z})^2,}
\end{equation*}
    with generating function
    \begin{equation*}
    {\bf C}_{3}^{(\text{b})}(z)=
    \left(\frac{\frac{z^4}{1-z^2}}{1-\frac{z^4}{1-z^2}
    \left(\frac{2z}{1-z}+\left(\frac{z}{1-z}\right)^2\right)}-z^4\right)
    \cdot \left(\frac{1}{1-z}\right)^2.
\end{equation*}
\end{itemize}
\begin{figure}[h!t!b!p]
\centering
\includegraphics[width=0.6\textwidth]{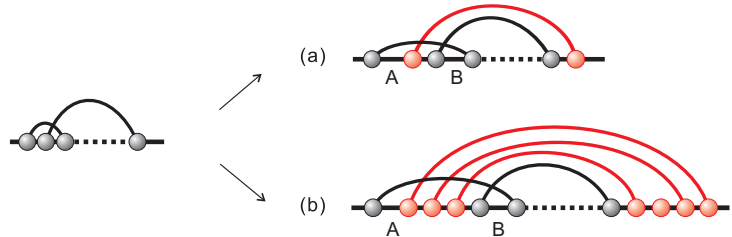}
\caption{\small $\mathbf{C}_3$-class: only one arc is inflated here and
its inflation distinguishes two subcases.
Arc-coloring as in Fig.\ \ref{F:step2a}}\label{F:step3}
\end{figure}
Consequently, this inflation process leads to a generating function
\begin{equation}\label{E:K3}
{\bf C}_3(z)={\bf C}_{3}^{(\text{a})}(z)+{\bf C}_{3}^{(\text{b})}(z).
\end{equation}
Note that during inflation (a) and (b) only {\it one} of the two arcs
of an $\mathbf{C}_3$-class element is being inflated.

\item {\bf $\mathbf{C}_4$-class:} this class consists of arc-triples
     $(\alpha_1,\beta,\alpha_2)$, where $\alpha_1$ and $\alpha_2$
     are $2$-arcs, respectively, that cross $\beta$.
\begin{itemize}
\item $\beta$ is inflated to a $2$-stack,
      see Fig.\ \ref{F:step4}. Using similar arguments as in the case of
      $\mathbf{C}_3$-class, we arrive at
\begin{equation*}
    \mathcal{C}_{4}^{(\text{a})}=
     \mathcal{R}^2 \times (\textsc{Seq}(\mathcal{Z})^2-\mathcal{E})
     \times(\textsc{Seq}(\mathcal{Z})^2-\mathcal{E}),
\end{equation*}
    with generating function
\begin{equation*}
    {\bf C}_{4}^{(\text{a})}(z)=
    z^4\left(\left(\frac{1}{1-z}\right)^2-1\right)^2.
\end{equation*}
\item the arc $\beta$ is inflated to an arbitary
     $\dagger$-stem\index{$\dagger$-stem},
     see Fig.\ \ref{F:step4},
\begin{equation*}
     \mathcal{C}_{4}^{(\text{b})}=
     (\mathcal{M}-\mathcal{R}^2)\times
     {\textsc{Seq}(\mathcal{Z})^4,}
\end{equation*}
    with generating function
\begin{equation*}
    {\bf C}_{4}^{(\text{b})}(z)=
    \left(\frac{\frac{z^4}{1-z^2}}{1-\frac{z^4}{1-z^2}
    \left(\frac{2z}{1-z}+\left(\frac{z}{1-z}\right)^2\right)}-z^4\right)
    \cdot \left(\frac{1}{1-z}\right)^4.
\end{equation*}
\end{itemize}
\begin{figure}[h!t!b!p]
\centering
\includegraphics[width=0.8\textwidth]{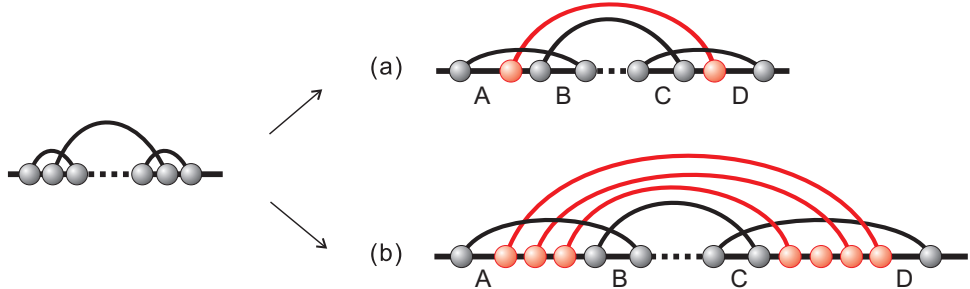}
\caption{\small $\mathbf{C}_4$-class: as for the inflation of $\mathbf{C}_3$
only the non $2$-arc is inflated, distinguishing two subcases.
Arc-coloring as in Fig.\ \ref{F:step2a}}
\label{F:step4}
\end{figure}
Accordingly we arrive at
\begin{equation}\label{E:K4}
{\bf C}_4(z)={\bf C}_{4}^{(\text{a})}(z)+{\bf C}_{4}^{(\text{b})}(z).
\end{equation}
\end{itemize}
The inflation of any arc of $\gamma$ not considered in the previous
steps follows the logic of Proposition~\ref{P:k=2}. We observe that
$(s-2u_2-u_3-u_4)$ arcs of the shape $\gamma$ have not been considered. 
Furthermore, $(2s+1-u_1-3u_2-2u_3-4u_4)$ intervals were not considered 
for the insertion of isolated vertices.
The inflation of these along the lines of Proposition~\ref{P:k=2} gives
rise to the class
\begin{equation*}
\mathcal{S}=
\mathcal{M}^{s-2u_2-u_3-u_4}\times
(\textsc{Seq}(\mathcal{Z}))^{2s+1-u_1-3u_2-2u_3-4u_4},
\end{equation*}
having the generating function
\begin{eqnarray*}
{\bf S}(z) & = & \left(\frac{\frac{z^4}{1-z^2}}{1-\frac{z^4}{1-z^2}
           \left(\frac{2z}{1-z}+\left(\frac{z}{1-z}\right)^2\right)}\right)
            ^{s-2u_2-u_3-u_4}\times \\
 &&  \qquad\qquad\qquad\qquad   \quad
      \left(\frac{1}{1-z}\right)^{2s+1-u_1-3u_2-2u_3-4u_4}.
\end{eqnarray*}
Since all these inflations can freely be combined, we have
\begin{eqnarray*}
\mathcal{Q}_\gamma = \mathcal{C}_1^{u_1}\times\mathcal{C}_2^{u_2}\times
                     \mathcal{C}_3^{u_3}\times\mathcal{C}_4^{u_4}\times
                      \mathcal{S},
\end{eqnarray*}
whence
\begin{align*}
{\bf Q}_\gamma(z) &=
{\bf C}_1(z)^{u_1}\cdot{\bf C}_2(z)^{u_2}\cdot
{\bf C}_3(z)^{u_3}\cdot{\bf C}_4(z)^{u_4}\cdot
{\bf S}(z)\\
&=\frac{1}{1-z}\varsigma_0(z)^s \varsigma_1(z)^{u_1} \varsigma_2(z)^{u_2}
\varsigma_3(z)^{u_3} \varsigma_4(z)^{u_4},
\end{align*}
where
\begin{align*}
\varsigma_0(z)&=\frac{z^4}{1 - 2 z + 2 z^3 - z^4 - 2 z^5 + z^6}
,\quad\varsigma_1(z)=z^3\\
\varsigma_2(z)&=
   \frac{z (1 - 4 z^3 + 2 z^4 + 8 z^5 - 6 z^6 - 7 z^7 + 8 z^8 + 2 z^9 -
   4 z^{10} + z^{11})}{1-z}\\
\varsigma_3(z)&=z (2 - 2 z^2 + z^3 + 2 z^4 - z^5)\\
\varsigma_4(z)&=z^2 (5 - 4 z - 3 z^2 + 6 z^3 + 2 z^4 - 4 z^5 + z^6).
\end{align*}
Observing that ${\bf Q}_{\gamma_1}(z)={\bf Q}_{\gamma_2}(z)$ for any
$\gamma_1,\gamma_2\in \mathcal{I}_k(s,\vec{u})$, we have according
to eq.~(\ref{E:tk2}):
\begin{equation*}
{\bf Q}_k(z)=\sum_{s,\vec{u}\ge 0}\, i_k(s,\vec{u})\;{\bf Q}_\gamma(z),
\end{equation*}
where $\vec{u}\ge 0$ denotes $u_i\geq0$ for $1\le i\le 4$.
Proposition \ref{P:5tri} guarantees
\begin{align*}
&\ \sum_{s,\vec{u}\geq0}i_k(s,\vec{u})\; x^n y^{u_1}
    z^{u_2} w^{u_3} t^{u_4}\\
&=\frac{1+x}{1-(y-2)x+(2w-z-1)x^2+(2w-z-1)x^3}\ \times \\
& \quad\  {\bf F}_k\left(\frac{x(1+(2w-1)x+(t-1)x^2)}
{(1-(y-2)x+(2w-z-1)x^2+(2w-z-1)x^3)^2}\right).
\end{align*}
Setting $x=\varsigma_0(z)$, $y=\varsigma_1(z)$, $z=\varsigma_2(z)$,
$w=\varsigma_3(z)$, $t=\varsigma_4(z)$, we arrive at
\begin{eqnarray*}
{\bf Q}_k(z)&=&
\frac{1-z^2+z^4}{1-z-z^2+z^3+2z^4+z^6-z^8+z^{10}-z^{12}}\ \times \\
& &{\bf F}_k\left(\frac{z^4(1-z^2-z^4+2z^6-z^8)}
{(1-z-z^2+z^3+2z^4+z^6-z^8+z^{10}-z^{12})^2}\right).
\end{eqnarray*}
By Lemma~\ref{L:k-D}, ${\bf Q}_k(z)$ is $D$-finite. Pringsheim's
Theorem \cite{Tichmarsh:39} guarantees that ${\bf Q}_k(z)$ has a
dominant real positive singularity $\gamma_{k}$. We verify
that for $3\leq k\leq 9$, $\gamma_{k}$ which is the unique
solution of minimum modulus of the equation $\vartheta(z)=\rho_k^2$
is the unique dominant singularity of ${\bf Q}_k(z)$,  and
$\vartheta'(z)\neq 0$, see the SM. According to
Proposition~\ref{P:algeasym} we therefore have
\begin{equation*}
{\sf Q}_k(n)\sim  c_{k}\, n^{-((k-1)^2+(k-1)/2)}\,
(\gamma_{k}^{-1})^n ,\quad \text{for some {$c_{k}^{}>0$}}\qquad
\end{equation*}
and the proof of Theorem~\ref{T:queen} is complete.
\end{proof}

\begin{remark}\label{R:obacht}{\rm
We remark that Theorem~\ref{T:queen} does not hold for $k=2$,
i.e.~we cannot compute the generating function ${\bf Q}_{2}(z)$ via
eq.~(\ref{E:gut0}). The reason is that Lemma~\ref{L:recursions-0}
only holds for $k>2$ and indeed we find
\begin{equation}\label{E:obacht}
{\bf Q}_2(z)\neq
\frac{1-z^2+z^4}{q(z)}
{\bf F}_2\left(\frac{z^4(1-z^2-z^4+2z^6-z^8)}{q(z)^2}\right).
\end{equation}
However, the computation of the generating function ${\bf Q}_2(z)$
in Proposition~\ref{P:k=2} is based on Lemma~\ref{T:gfIk}, which
does hold for $k=2$.}
\end{remark}

{\bf Acknowledgments.}
This work was supported by the 973 Project, the PCSIRT Project of
the Ministry of Education, the Ministry of Science and Technology,
and the National Science Foundation of China.

\bibliographystyle{plain}

\begin{thebibliography}{00}


\bibitem{Chen}
Chen, W.Y.C., Deng, E.Y.P., Du, R.R.X., Stanley, R.P., Yan, C.H.
2007. {Crossings and nestings of matchings and partitions,} {\it
Trans. Amer. Math. Soc.} 359, 1555--1575.

\bibitem{Flajolet:07a}  Flajolet, P. and Sedgewick, R. 2009. {\it Analytic
combinatorics}, Cambridge University Press, New York.

\bibitem{Grabiner:93a}  Grabiner, D.J. and Magyar, P. 1993.
{Random walks in {Weyl} chambers and the decomposition of tensor
powers}, {\it J. Algebr. Comb.} 2, 239--260.

\bibitem{Reidys:08han}
 Han, H. S. W. and Reidys, C. M. 2008. Pseudoknot RNA structures with arc-lenght $\geq4$. {\it J. Comp. bio.}, 9, 1195--1208.

\bibitem{Stadler:99}
Haslinger, C. and Stadler, P.F., 1999. RNA structures with pseudo-knots. Bull. Math. Biol. 61, 437--467.

\bibitem{Schuster:98}
Hofacker, I.L., Schuster, P. and Stadler, P.F. 1998. {Combinatorics
of RNA secondary structures.}, {\it Discr. Appl. Math.} 88,
207--237.

\bibitem{Waterman:80}
 Howell, J.A., Smith, T.F.,  and Waterman, M.S. 1980. Computation  of generating functions for biological  molecules
{\it SIAM J. Appl. Math.} 39, 119¨C-133.

\bibitem{Reidys:07pseu} Jin, E.Y., Qin, J. and Reidys, C.M. 2008.
{Combinatorics of RNA structures with pseudoknots}, {\it Bull. Math.
Biol.} 70, 45--67.

\bibitem{Reidys:07asym}
Jin, E.Y. and Reidys, C.M. 2008. {Asymptotic enumeration of RNA
structures with pseudoknots}, {\it Bull. Math. Biol.} 70, 951--970.

\bibitem{Reidys:07lego}
Jin, E.Y. and Reidys, C.M. 2009. {Combinatorial design of pseudoknot
RNA}, {\it Adv. Appl. Math.} 42, 135--151.

\bibitem{Reidys:08k}
Jin, E.Y.,  Reidys, C.M. and Wang, R.R. 2008. { Asympotic analysis
of $k$-noncrossing matchings,} {\em arXiv:0803.0848}.

\bibitem{Reidys:08ma}
Ma, G. and Reidys, C.M. 2008. {Canonical RNA pseudoknot structures},
{\it J. Comput. Biol.} 15, 1257--1273.

\bibitem{Waterman:93}
 Penner, R.C. and Waterman, M.S. 1993. Spaces of RNA secondary {\it structures Adv. Math.} 101, 31--49.


\bibitem{Reidys:09shape}
Reidys, C.M. and Wang, R.R. 2009. Shapes of RNA pseudoknot structures, {\em arXiv:0906.3999}.

\bibitem{Rietveld:82}
 Rietveld, K., Van Poelgeest, R.,  Pleij, C.W., Van Boom, J.H., and Bosch, L. 1982. The tRNA-like structure at the $3'$ terminus of turnip yellow mosaic virus RNA. Differences and similarities with canonical tRNA. {\it Nucleic Acids Res}, 10,1929--1946.

\bibitem{Stanley:80}
Stanley, R. 1980. {Differentiably finite power series}, {\it Europ.
J. Combinatorics} 1, 175--188.


\bibitem{Tichmarsh:39}
Titchmarsh, E.C. 1939.{\it The theory of functions}, Oxford
Uninversity Press, Oxford, UK.

\bibitem{Tuerk:92}
Tuerk, C., MacDougal, S. and Gold, L. 1992. {RNA pseudoknots that
inhibit human immunodeficiency virus type 1 reverse transcriptase},
{\it Proc. Natl. Acad. Sci. USA} 89, 6988--6992.

\bibitem{Wasow:87}
Wasow, W. 1987. {\it Asymptotic expansions for ordinary differential
equations}, Dover, New York.

\bibitem{Waterman:78b}
Waterman, M.S. and Smith, T.F. 1978. RNA Secondary Structure: A  Complete Mathematical Analysis, {\it Mathematical Bioscience}, 42, 257--266.

\bibitem{Waterman:79}
Waterman, M.S. 1979. {Combinatorics of RNA hairpins and
cloverleafs}, {\it Stud. Appl. Math.} 60, 91--96.

\bibitem{Waterman:94a}
Waterman, M.S. and Schmitt, W.R. 1994. {Linear trees and RNA
secondary structure,} {\it Discr. Appl. Math.} 51, 317--323.

\end{thebibliography}


\end{document}